\documentclass[12pt]{article}

\usepackage{latexsym,amssymb,upref,amsmath,amsthm,amsfonts}
\usepackage {color}
\usepackage {pstricks, pst-node}
\usepackage {graphicx}
\usepackage {latexsym}
\usepackage [T1]{fontenc}
\usepackage [cp1250]{inputenc}
\usepackage{bbm}
\usepackage{bbold}

\newcommand{\achr}{\mathrm{achr}}
\newcommand{\ro}{\mathbb R}
\newcommand{\co}{\mathbb C}

\newcommand{\bp}{\noindent\textit{Proof. }}
\newcommand{\ep}{\blacksquare}

\newcommand{\exc}{\mathrm{exc}}
\newcommand{\frq}{\mathrm{frq}}
\newcommand{\cov}{\mathrm{cov}}
\newcommand{\Cov}{\mathrm{Cov}}

\newcommand{\cM}{\mathcal{M}}

\newtheorem{theorem}{Theorem}
\newtheorem{lemma}[theorem]{Lemma}
\newtheorem{proposition}[theorem]{Proposition}

\newtheorem{claim}{Claim}
\newtheorem{corollary}[theorem]{Corollary}

\title{\bf The achromatic number of $K_6\square K_q$\\equals $2q+3$ if $q\ge41$ is odd}

\author{Mirko Hor\v n\' ak\thanks{\noindent E-mail address: mirko.hornak$@$upjs.sk}\\
Institute of Mathematics, P.J. \v{S}af\'arik University\\
Jesenn\'a 5, 040\ 01 Ko\v{s}ice, Slovakia}

\date{}

\begin{document}
\maketitle

\begin{abstract}
Let $G$ be a graph and $C$ a finite set of colours. A vertex colouring $f:V(G)\to C$ is complete provided that for any two distinct colours $c_1,c_2\in C$ there is $v_1v_2\in E(G)$ such that $f(v_i)=c_i$, $i=1,2$. The achromatic number of $G$ is the maximum number $\achr(G)$ of colours in a proper complete vertex colouring of $G$. In the paper it is proved that if $q\ge41$ is an odd integer, then the achromatic number of the Cartesian product of $K_6$ and $K_q$ is $2q+3$.
\end{abstract}

\noindent {\bf Keywords:} complete vertex colouring, achromatic number, Cartesian product, complete graph

\section{Introduction}

Let $G$ be a finite simple graph and $C$ a finite set of colours. A vertex colouring $f:V(G)\to C$ is \textit{complete} if for any pair of distinct colours $c_1,c_2\in C$ one can find in $G$ an edge $\{v_1,v_2\}$ (often shortened to $v_1v_2$) such that $f(v_i)=c_i$, $i=1,2$. The \textit{achromatic number} of $G$, denoted by $\achr(G)$, is the maximum cardinality of the colour set in a proper complete vertex colouring of $G$.

The concept was introduced quite a long ago in Harary, Hedetniemi and Prins~\cite{HaHePr}, where it was proved the following:

\begin{theorem}\label{ip}
If $G$ is a graph, and an integer $k$ satisfies $\chi(G)\le k\le\achr(G)$, there exists a proper complete vertex colouring of $G$ using $k$ colours. $\qed$
\end{theorem}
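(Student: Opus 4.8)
The plan is to prove the sharper combinatorial statement that the set
$S(G)=\{k : G \text{ has a proper complete vertex colouring with exactly } k \text{ colours}\}$
is exactly the integer interval $[\chi(G),\achr(G)]$; the assertion then follows, since the hypothesis is precisely $\chi(G)\le k\le\achr(G)$. The two endpoints are immediate. By definition $\achr(G)=\max S(G)$, so $\achr(G)\in S(G)$. For the lower endpoint I would note that any proper colouring using exactly $\chi(G)$ colours is automatically complete: were there a pair of colour classes joined by no edge, recolouring one of them with the colour of the other would produce a proper colouring with $\chi(G)-1$ colours, contradicting the definition of $\chi(G)$. Hence $\chi(G)\in S(G)$, and in fact $\chi(G)=\min S(G)$, because every proper complete colouring is in particular proper and so uses at least $\chi(G)$ colours.

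It then remains to show that $S(G)$ contains no gaps, for which the natural target is a one-step lemma: if $k\in S(G)$ and $k>\chi(G)$, then $k-1\in S(G)$. One clean fact works in our favour: completeness is robust under deleting an entire colour. Indeed, a witnessing edge for a pair $\{c,d\}$ of colours, with $c$ and $d$ both distinct from the deleted colour, uses no vertex of the deleted class and therefore survives. So if one could reassign the vertices of a single colour class to the remaining colours while preserving \emph{properness}, the colour count would drop by one and completeness would be retained for free.

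The main obstacle is that properness obstructs exactly this move. Because the colouring is complete, there is an edge between every two colour classes, so no two classes can simply be merged; worse, reassigning one class within the remaining colours can be impossible when some vertex already sees all other colours on its neighbourhood. This is not a rare degeneracy: the colouring $abcabc$ of $C_6$ is complete with three colours, yet every vertex sees both other colours, so no single colour can be removed by any local recolouring, even though a complete two-colouring of $C_6$ certainly exists. The lesson is that the required $(k-1)$-colouring generally cannot be obtained by editing the given $k$-colouring, and the one-step lemma must instead be established by a \emph{global} construction — for example by building the smaller complete colouring with the guidance of an optimal $\chi(G)$-colouring, and then verifying directly that the newly assembled colouring both uses $k-1$ colours and realises every pair. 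Organising this construction so that it simultaneously controls the colour count and witnesses every pair is the heart of the argument; once the lemma is in hand, the interval statement, and hence the theorem, follows by iterating from $\achr(G)$ down to $\chi(G)$.
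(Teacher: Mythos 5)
You have correctly organised the problem: the interval formulation, the two endpoints (that $\achr(G)\in S(G)$ by definition, and that every proper colouring with exactly $\chi(G)$ colours is automatically complete via the merge-two-classes argument), and the useful observation that completeness is preserved when an entire colour class is eliminated by recolouring, so that the whole theorem reduces to the one-step lemma ``$k\in S(G)$ and $k>\chi(G)$ imply $k-1\in S(G)$''. Your $C_6$ example (the colouring $abcabc$) is also correct and genuinely instructive: it shows that the required $(k-1)$-colouring cannot in general be obtained by locally recolouring one class of the given $k$-colouring. The difficulty is that at exactly this point you stop. The one-step lemma \emph{is} the interpolation theorem of Harary, Hedetniemi and Prins --- everything you prove before it is routine --- and your text defers it with phrases like ``building the smaller complete colouring with the guidance of an optimal $\chi(G)$-colouring'' and ``once the lemma is in hand''. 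No construction is defined, no invariant is maintained, and nothing is verified; what you have written is a correct reduction plus an accurate description of why the naive approach fails, not a proof.

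For the comparison you asked about: the paper itself offers no proof either --- Theorem~\ref{ip} is stated with a $\qed$ and attributed to \cite{HaHePr}, so it is quoted from the literature rather than proved in the paper. That does not rescue the proposal, since your task was a blind proof and the central step is missing. To close the gap you would need an actual mechanism for the descent, for instance the original homomorphism argument of \cite{HaHePr}: a proper complete $k$-colouring is exactly a vertex- and edge-surjective homomorphism of $G$ onto $K_k$, obtained as a chain of elementary identifications of non-adjacent vertices, and one interpolates along such chains rather than by editing a single fixed colouring. Any correct write-up must, in particular, get past your own $C_6$ obstruction, which is a good sanity check that a proposed construction is genuinely global; as submitted, the proposal does not contain one.
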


\noindent There are still only few graph classes $\mathcal G$ such that $\achr(G)$ is known for all $G\in\mathcal G$. This is certainly related to the fact that determining the achromatic number is an NP-complete problem even for trees, see Cairnie and Edwards~\cite{CaE}. Two surveys are available on the topic, namely Edwards~\cite{E1} and Hughes and MacGillivray\cite{HuMG}; more generally, Chapter~12 in the book~\cite{ChaZ} by Chartrand and Zhang deals with complete vertex colourings. A comprehensive list of publications concerning the achromatic number is maintained by Edwards~\cite{E2}.

Some papers are devoted to the achromatic number of graphs constructed by graph operations. So, Hell and Miller~\cite{HeM} considered $\achr(G_1\times G_2)$, where $G_1\times G_2$ stands for the categorical product of graphs $G_1$ and $G_2$ (we follow here the notation by Imrich and Klav\v zar~\cite{IK}).

In this paper we are interested in the achromatic number of the Cartesian product $G_1\square G_2$ of graphs $G_1$ and $G_2$, the graph with $V(G_1\square G_2)=\{(v_1,v_2):v_i\in V(G_i),i=1,2\}$, in which $(v_1^1,v_2^1)(v_1^2,v_2^2)\in E(G_1\square G_2)$ if and only if there is $i\in\{1,2\}$ such that $v_i^1v_i^2\in E(G_i)$ and $v_{3-i}^1=v_{3-i}^2$. As observed by Chiang and Fu~\cite{ChiF1}, $\achr(G_1)=p$ and $\achr(G_2)=q$ implies $\achr(G_1\square G_2)\ge\achr(K_p\square K_q)$. This inequality motivates a special interest in the achromatic number of the Cartesian product of two complete graphs. From the obvious fact that $G_2\square G_1$ is isomorphic to $G_1\square G_2$ it is clear that when determining $\achr(K_p\square K_q)$ we may suppose without loss of generality $p\le q$. 

The problem of determining $\achr(K_p\square K_q)$ with $p\le4$ was solved in Hor\v n\'ak and Puntig\'an~\cite{HoPu} (for $p\le3$ the result was rediscovered in~\cite{ChiF1}) and that for $p=5$ in Hor\v n\'ak and P\v cola~\cite{HoPc1}, \cite{HoPc2}. In~\cite{ChiF2} Chiang and Fu proved that if $r$ is an odd projective plane order, then $\achr(K_{(r^2+r)/2}\square K_{(r^2+r)/2})=(r^3+r^2)/2$. (For $r=3$ the fact that $\achr(K_6\square K_6)=18$ was known already to Bouchet~\cite{B}.) 

Here we show that $\achr(K_6\square K_q)=2q+3$ if $q$ is an odd integer with $q\ge41$. This is the first of three papers devoted to completely solve the problem of finding $\achr(K_6\square K_q)$.

For $k,l\in\mathbb Z$ we denote \textit{integer intervals} by
\begin{equation*}
[k,l]=\{z\in\mathbb Z:k\le z\le l\},\qquad [k,\infty)=\{z\in\mathbb Z:k\le z\}. 
\end{equation*}
Further, with a set $A$ and $m\in[0,\infty)$ we use $\binom Am$ for the set of $m$-element subsets of $A$.

Now let $p,q\in[1,\infty)$. Under the assumption that $V(K_r)=[1,r]$, $r=p,q$, we have $V(K_p\square K_q)=[1,p]\times[1,q]$, while $E(K_p\square K_q)$ consists of edges $(i,j_1)(i,j_2)$ with $i\in[1,p]$, $j_1,j_2\in[1,q]$, $j_1\ne j_2$ and $(i_1,j)(i_2,j)$ with $i_1,i_2\in[1,p]$, $i_1\ne i_2$, $j\in[1,q]$. 

A vertex colouring $f:[1,p]\times[1,q]\to C$ of the graph $K_p\square K_q$ can be conveniently described using the $p\times q$ matrix $M=M(f)$ whose entry in the $i$th row and the $j$th column is $(M)_{i,j}=f(i,j)$. Such a colouring is proper if any row of $M$ consists of $q$ distinct entries and any column of $M$ consists of $p$ distinct entries. Further, $f$ is complete provided that any pair $\{\alpha,\beta\}\in\binom C2$ is \textit{good} in $M$ in the following sense: there are  $(i_1,j_1),(i_2,j_2)\in[1,p]\times[1,q]$ such that $\{(M)_{i_1,j_1}, (M)_{i_2,j_2}\}=\{\alpha,\beta\}$ and either $i_1=i_2$, which we express by saying that the pair $\{\alpha,\beta\}$ is \textit{row-based} (in $M$), or $j_1=j_2$, \textit{i.e.}, the pair $\{\alpha,\beta\}$ is \textit{column-based} (in $M$). 

Let $\mathcal M(p,q,C)$ denote the set of $p\times q$ matrices $M$ with entries from $C$ such that all rows (columns) of $M$ have $q$ ($p$, respectively) distinct entries, and each pair $\{\alpha,\beta\}\in\binom C2$ is good in $M$. So, if $f:[1,p]\times[1,q]\to C$ is a proper complete vertex colouring of $K_p\square K_q$, then $M(f)\in\mathcal M(p,q,C)$. 

Conversely, if $M\in\mathcal M(p,q,C)$, then the mapping $f_M:[1,p]\times[1,q]\to C$ with $f_M(i,j)=(M)_{i,j}$ is a proper complete vertex colouring of $K_p\square K_q$. Thus, we have proved:

\begin{proposition}\label{general}
If $p,q\in[1,\infty)$ and $C$ is a finite set, then the following statements are equivalent:

$\mathrm(1)$ There is a proper complete vertex colouring of $K_p\square K_q$ using as colours elements of $C$.

$\mathrm(2)$ $\mathcal M(p,q,C)\ne\emptyset$.\quad$\qed$
\end{proposition}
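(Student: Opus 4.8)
The plan is to establish the equivalence in Proposition~\ref{general} by proving both implications directly, since the entire setup has been arranged precisely so that the two conditions translate into one another almost mechanically. The key observation is that a proper complete vertex colouring $f:[1,p]\times[1,q]\to C$ and its matrix encoding $M=M(f)$ carry exactly the same combinatorial data: the value $f(i,j)$ is by definition the entry $(M)_{i,j}$, so properties of one transfer verbatim to the other once we correctly match up the graph-theoretic conditions with the matrix conditions.

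For the implication $(1)\Rightarrow(2)$, I would start from a proper complete colouring $f$ and verify that $M(f)\in\mathcal M(p,q,C)$, which requires checking the three defining conditions of $\mathcal M(p,q,C)$. First, properness of $f$ means adjacent vertices receive distinct colours; since the vertices $(i,j_1),(i,j_2)$ with $j_1\ne j_2$ are adjacent in $K_p\square K_q$ (they agree in the first coordinate and differ in the second, which is an edge of $K_q$), every row of $M(f)$ must have distinct entries, and symmetrically every column must have distinct entries because $(i_1,j),(i_2,j)$ with $i_1\ne i_2$ are adjacent. Second, completeness of $f$ says every pair $\{\alpha,\beta\}\in\binom C2$ is realised on some edge $(i_1,j_1)(i_2,j_2)$; the two types of edges in $K_p\square K_q$ correspond exactly to the two ways a pair can be good in $M$, namely row-based (when $i_1=i_2$) and column-based (when $j_1=j_2$). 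Conversely, for $(2)\Rightarrow(1)$ I would take $M\in\mathcal M(p,q,C)$, define $f_M(i,j)=(M)_{i,j}$, and run the same correspondence backwards to conclude that $f_M$ is proper and complete.

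The whole argument is essentially a dictionary between the language of graph colourings of $K_p\square K_q$ and the language of matrices, and the author has already done most of the work in the text preceding the statement by defining ``good'', ``row-based'' and ``column-based'' so as to mirror the edge structure. I do not expect any genuine obstacle here; the only point requiring mild care is confirming that the edge set $E(K_p\square K_q)$, as spelled out in the excerpt, splits cleanly into the two families $\{(i,j_1)(i,j_2)\}$ and $\{(i_1,j)(i_2,j)\}$ so that there is a perfect match between edges realising a colour pair and the two ways of being good. Since the passage in fact states ``Thus, we have proved'' immediately before the proposition, the expected proof is just a one-line pointer back to these two observations, and I would write it as such rather than repeating the verification in full.
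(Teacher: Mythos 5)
Your proposal is correct and matches the paper's own argument: the paper establishes exactly this dictionary in the text preceding the proposition (properness $\leftrightarrow$ distinct entries in each row and column, completeness $\leftrightarrow$ every pair in $\binom{C}{2}$ being row- or column-based good), and then records the equivalence with ``Thus, we have proved''. You correctly identified both the content and the fact that the formal proof is just a pointer back to this verification.
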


We have another evident result:

\begin{proposition}\label{wlog}
If $p,q\in[1,\infty)$, $C,D$ are finite sets, $M\in\mathcal M(p,q,C)$, mappings $\rho:[1,p]\to[1,p]$, $\sigma:[1,q]\to[1,q]$, $\pi:C\to D$ are bijections, and $M_{\rho,\sigma}$, $M_{\pi}$ are $p\times q$ matrices defined by $(M_{\rho,\sigma})_{i,j}=(M)_{\rho(i),\sigma(j)}$ and $(M_{\pi})_{i,j}=\pi((M)_{i,j})$, then $M_{\rho,\sigma}\in\mathcal M(p,q,C)$ and $M_{\pi}\in\mathcal M(p,q,D)$.\quad$\qed$
\end{proposition}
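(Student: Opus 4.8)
The plan is to observe that membership in $\mathcal M(p,q,C)$ is the conjunction of three independent requirements — distinctness of entries within each row, distinctness within each column, and goodness of every pair of colours — and to check directly that each of the two reindexing operations preserves all three. Since the two operations act on different parts of the data (the index sets versus the colour set), I would treat $M_{\rho,\sigma}$ and $M_\pi$ separately, in each case leaning on the hypothesis that the relevant map is a bijection.

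For $M_{\rho,\sigma}$ I would first verify the distinctness conditions. Fix a row $i\in[1,p]$; its entries are $(M_{\rho,\sigma})_{i,j}=(M)_{\rho(i),\sigma(j)}$ for $j\in[1,q]$. Because $\sigma$ is a bijection of $[1,q]$, as $j$ runs over $[1,q]$ the image $\sigma(j)$ runs over all of $[1,q]$, so the set of entries in row $i$ of $M_{\rho,\sigma}$ coincides with the set of entries in row $\rho(i)$ of $M$; the latter are pairwise distinct by $M\in\mathcal M(p,q,C)$, hence so are the former. The argument for columns is symmetric, using that $\rho$ is a bijection of $[1,p]$. For goodness I would take an arbitrary pair $\{\alpha,\beta\}\in\binom C2$, which is good in $M$, so there are positions $(i_1,j_1),(i_2,j_2)$ with $\{(M)_{i_1,j_1},(M)_{i_2,j_2}\}=\{\alpha,\beta\}$ and $i_1=i_2$ or $j_1=j_2$. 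Setting $(i_k',j_k')=(\rho^{-1}(i_k),\sigma^{-1}(j_k))$ gives $(M_{\rho,\sigma})_{i_k',j_k'}=(M)_{i_k,j_k}$, and since $\rho,\sigma$ are injective, $i_1=i_2\Leftrightarrow i_1'=i_2'$ and $j_1=j_2\Leftrightarrow j_1'=j_2'$. Thus a row-based (column-based) witness in $M$ pulls back to a row-based (column-based) witness in $M_{\rho,\sigma}$, and the pair remains good.

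For $M_\pi$ the entries lie in $D$, and the argument is a relabelling of colours. Distinctness is immediate: along any fixed row or column the original entries are distinct and $\pi$ is injective, so their $\pi$-images stay distinct. For goodness I would start from an arbitrary pair $\{\gamma,\delta\}\in\binom D2$; as $\pi$ is a bijection, $\{\pi^{-1}(\gamma),\pi^{-1}(\delta)\}\in\binom C2$ is good in $M$, witnessed by some $(i_1,j_1),(i_2,j_2)$ sharing a row or a column. Applying $\pi$ to both entries carries this witness verbatim to $M_\pi$, since $(M_\pi)_{i,j}=\pi((M)_{i,j})$ leaves the positions untouched; hence $\{\gamma,\delta\}$ is good in $M_\pi$.

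There is no real obstacle here: the statement holds because both operations are ``changes of coordinates'' that respect the combinatorial structure, and the only point requiring the slightest care is the bookkeeping in the goodness step — namely that permuting indices preserves the distinction between row-based and column-based pairs (each of $\rho,\sigma$ acts within its own coordinate), whereas recolouring preserves goodness precisely because it does not move positions at all. I would record this proposition mainly because it licenses the standing convention, used throughout the rest of the paper, that matrices in $\mathcal M(p,q,C)$ may be normalised up to row/column permutation and up to renaming of colours without loss of generality.
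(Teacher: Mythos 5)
Your proof is correct and is precisely the routine verification that the paper treats as self-evident: Proposition~\ref{wlog} is stated with $\qed$ and no written proof (it is introduced as ``another evident result''), and the intended justification is exactly your three-part check that bijectivity of $\rho,\sigma$ permutes rows and columns (preserving distinctness and pulling goodness witnesses back through $\rho^{-1},\sigma^{-1}$) while bijectivity of $\pi$ relabels colours without moving positions. Your one point of genuine care --- that each of $\rho,\sigma$ acts within its own coordinate, so row-based and column-based witnesses are preserved as such --- is the right detail to flag, and nothing further is needed.
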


Let $M\in\mathcal M(p,q,C)$. The \textit{frequency} of a colour $\gamma\in C$ is the number $\frq(\gamma)$ of appearances of $\gamma$ in $M$, and the \textit{frequency} of $M$, denoted $\frq(M)$, is the minimum of frequencies of colours in $C$. A colour of frequency $l$ is an \textit{$l$-colour}. $C_l$ is the set of $l$-colours, $c_l=|C_l|$, and $C_{l+}$ is the set of colours of frequency at least $l$, $c_{l+}=|C_{l+}|$. We denote by $\ro(i)$ the set $\{(M)_{i,j}:j\in[1,q]\}$ of colours in the $i$th row of $M$ and by $\co(j)$ the set $\{(M)_{i,j}:i\in[1,p]\}$ of colours in the $j$th column of $M$. Further, for $k\in\{l,l+\}$ let
\begin{alignat*}{2}
\ro_k(i)&=C_k\cap\ro(i),\qquad &r_k(i)&=|\ro_k(i)|,\\
\co_k(j)&=C_k\cap\co(j), &c_k(j)&=|\co_k(j)|.\\
\end{alignat*}
\vskip-6mm
\noindent If $A\subseteq[1,p]$, $|A|\ge2$, then
\[
\ro(A)=\bigcap_{l\in A}(C_{|A|}\cap\ro(l)),\qquad r(A)=|\ro(A)|
\]
($\ro(A)$ is the set of $|A|$-colours appearing in all rows of $M$ numbered by elements of $A$). Provided that $A=\{i,j\},\{i,j,k\},\{i,j,k,l\}$, instead of $\ro(A)$ we write $\ro(i,j)$, $\ro(i,j,k)$, $\ro(i,j,k,l)$, while $r(A)$ is simplified to $r(i,j)$, $r(i,j,k)$, $r(i,j,k,l)$, respectively. With $\{i,j\}\subseteq[1,p]$ and $\{m,n\}\subseteq[1,q]$ we set
\begin{alignat*}{2}
\ro_{3+}(i,j)&=C_{3+}\cap\ro(i)\cap\ro(j),\qquad &r_{3+}(i,j)&=|\ro_{3+}(i,j)|,\\
\co(m,n)&=C_2\cap\co(m)\cap\co(n), &c(m,n)&=|\co(m,n)|.
\end{alignat*}
If $B\subseteq[1,p]$ and $3\le|B|\le p-2$, then 
\begin{equation*}
\ro^*(B)=\bigcup_{l=2}^{|B|}\bigcup_{A\in\binom Bl}\ro(A),\qquad r^*(B)=|\ro^*(B)|.
\end{equation*}
For $\gamma\in C$ let 
\begin{equation*}
\mathbb R(\gamma)=\{i\in[1,p]:\gamma\in\mathbb R(i)\}.
\end{equation*}

With $S\subseteq[1,p]\times[1,q]$ we say that a colour $\gamma\in C$ \textit{occupies a position in} $S$ if there is $(i,j)\in S$ such that $(M)_{i,j}=\gamma$. If $\emptyset\ne A\subseteq C$, the \textit{set of columns covered by} $A$ is
\begin{equation*}
\Cov(A)=\{j\in[1,q]:\co(j)\cap A\ne\emptyset\}.
\end{equation*}
We define $\cov(A)=|\Cov(A)|$, and with $A\in\{\{\alpha\},\{\alpha,\beta\}\}$ we use a simplified notation $\Cov(\alpha)$, $\Cov(\alpha,\beta)$ and $\cov(\alpha)$, $\cov(\alpha,\beta)$ instead of $\Cov(A)$ and $\cov(A)$.

\section{Lower bound}

\begin{proposition}\label{lb}
If $q\in[7,\infty)$ and $q\equiv1\pmod2$, then $\achr(K_6\square K_q)\ge2q+3$.
\end{proposition}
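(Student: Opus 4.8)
The plan is to invoke Proposition~\ref{general}: since the target lower bound is $2q+3$, it suffices to produce a single set $C$ with $|C|=2q+3$ and a single matrix $M\in\mathcal M(6,q,C)$, for then $\achr(K_6\square K_q)\ge|C|=2q+3$. Thus the whole proposition reduces to an explicit construction together with a verification of the three defining properties of $\mathcal M(6,q,C)$: every row of $M$ has $q$ distinct entries, every column has $6$ distinct entries, and every pair $\{\alpha,\beta\}\in\binom{C}{2}$ is good (row-based or column-based).

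First I would pin down the frequency profile. A colour of frequency $1$ sitting in a cell $(i,j)$ can be good only with the $q-1$ other colours of $\ro(i)$ and the $5$ other colours of $\co(j)$, i.e.\ with at most $q+4<2q+2$ colours, so $\frq(M)\ge2$ is forced. Because the six rows contain exactly $6q$ cells while $|C|=2q+3$, the cleanest profile consistent with this is to take $2q-6$ colours to be $3$-colours and the remaining $9$ to be $2$-colours, since $3(2q-6)+2\cdot9=6q$. The $3$-colours form the ``bulk'' that should make almost every pair row-based, while the nine $2$-colours form a small exceptional block whose pairs have to be arranged by hand.

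The heart of the argument is then the layout. I would split the $q$ columns into a large \emph{generic} block and a bounded \emph{special} block of $O(1)$ columns. In the generic block the $3$-colours are placed by a cyclic / near-one-factorization scheme on $[1,q]$; this is exactly where oddness of $q$ enters, giving a clean rotational pattern with no diagonal clashes so that each bulk colour meets each of the six rows at most once. The pattern should be chosen so that two bulk colours whose row-sets $\mathbb R(\alpha),\mathbb R(\beta)\subseteq[1,6]$ intersect automatically share a row, hence are row-based. The special columns are reserved to seat the nine $2$-colours, to create the common columns needed for those bulk pairs whose row-triples are complementary (and therefore row-disjoint), and to realise every pair that meets a $2$-colour.

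Finally I would verify the three conditions by splitting $\binom{C}{2}$ into cases: bulk--bulk pairs with intersecting row-sets (row-based, automatic from the cyclic pattern), bulk--bulk pairs with complementary row-sets (column-based, caught in the special block), and pairs meeting the nine $2$-colours. The hard part will be this last family---in particular the $\binom{9}{2}=36$ pairs among the $2$-colours, together with the bulk--$2$ pairs whose supports are disjoint---because a $2$-colour occupies only two cells and so reaches at most $2(q-1)+10=2q+8$ colours, barely above the required $2q+2$ once overlaps are discounted. Reconciling this tight per-colour budget with the global demand that all $\binom{2q+3}{2}$ pairs be good, while never violating distinctness in any row or any column, is the main obstacle; it is precisely this tension that dictates the exact rotational offsets in the generic block and the precise contents of the special columns.
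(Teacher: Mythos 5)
Your reduction to Proposition~\ref{general} and your frequency analysis are sound and match the paper's starting point: since $\frq(M)\ge2$ is forced and $2c_2+3c_3=6q$ with $c_2+c_3=2q+3$, the profile of nine $2$-colours and $2q-6$ three-colours is exactly what the paper uses. But from there the proposal is a plan, not a proof. The proposition is a pure existence statement, so its entire content is one explicit matrix, and you never write one down: the ``exact rotational offsets'' and the ``precise contents of the special columns'' are left undetermined, and you yourself flag the pairs involving the nine $2$-colours as an unresolved ``main obstacle.'' That obstacle \emph{is} the theorem; asserting that a cyclic pattern ``should'' make intersecting row-sets share a row, without exhibiting the pattern and checking all of $\binom{C}{2}$, leaves the proof with no verifiable content at its core.

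It is worth seeing how the paper closes exactly the gaps you defer, because one of your structural predictions is off. The paper sets $s=\frac{q-3}{2}$ (this integrality is the \emph{only} place oddness of $q$ enters, not the avoidance of diagonal clashes) and uses four families $X_s,Y_s,Z_s,T_s$ of $3$-colours, each placed in cyclically shifted runs over columns $[4,q]$, with row-sets $\{1,2,3\}$, $\{1,5,6\}$, $\{2,4,6\}$, $\{3,4,5\}$. These four triples are deliberately chosen \emph{pairwise intersecting}, and each also meets each of $\{1,4\}$, $\{2,5\}$, $\{3,6\}$; consequently every pair involving at least one $3$-colour is row-based automatically, and your anticipated case of bulk--bulk pairs with complementary row-sets, to be ``caught in the special block,'' simply never arises. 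The special block is only three columns, carrying each of the nine $2$-colours twice with row-sets $\{1,4\}$, $\{2,5\}$, $\{3,6\}$: any two $2$-colours each cover two of the three columns, hence share one by pigeonhole, which settles all $\binom{9}{2}$ pairs at once. The tight per-colour budget you worried about for $2$-colours dissolves for the same reason---each $2$-colour occupies two full rows whose union meets every family's row-triple. So your approach is the right one and is completable, but the explicit construction you postpone is not a technicality to be reconciled later; it is the whole proof.
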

\begin{proof}
Let $s=\frac{q-3}2$, and let $M$ be the $6\times q$ matrix below. We show that $M\in\cM(6,q,C)$, where $C=[1,9]\cup X_s\cup Y_s\cup Z_s\cup T_s$, $U_s=\{u_i:i\in[1,s]\}$ for $U\in\{X,Y,Z,T\}$, and the sets $[1,9]$, $X_s,Y_s,Z_s,T_s$ are pairwise disjoint.

\setcounter{MaxMatrixCols}{13}
\begin{equation*}
\begin{pmatrix}
1 &2 &3 &x_1 &x_2 &\dots &x_{s-1} &x_s &y_1 &y_2 &\dots &y_{s-1} &y_s\\
4 &5 &6 &x_s &x_1 &\dots &x_{s-2} &x_{s-1} &z_1 &z_2 &\dots &z_{s-1} &z_s\\
7 &8 &9 &t_1 &t_2 &\dots &t_{s-1} &t_s &x_1 &x_2 &\dots &x_{s-1} &x_s\\
3 &1 &2 &z_1 &z_2 &\dots &z_{s-1} &z_s &t_1 &t_2 &\dots &t_{s-1} &t_s\\
5 &6 &4 &t_s &t_1 &\dots &t_{s-2} &t_{s-1} &y_s &y_1 &\dots &y_{s-2} &y_{s-1}\\
8 &9 &7 &y_1 &y_2 &\dots &y_{s-1} &y_s &z_s &z_1 &\dots &z_{s-2} &z_{s-1}
\end{pmatrix}
\end{equation*}

Since $s\ge2$, because of our assumptions on the structure of $C$ it is clear that elements in lines (rows and columns) of $M$ are pairwise distinct. Thus it is sufficient to show that each pair $\{\alpha,\beta\}\in\binom C2$ is good in $M$. 

If $\alpha,\beta\in[1,9]$, then both $\alpha$ and $\beta$ appear twice in the columns $1,2,3$, hence the pair $\{\alpha,\beta\}$ is column-based.

If $\alpha\in C$ and $\beta\in X_s\cup Y_s\cup Z_s\cup T_s$, realise that $\ro(\alpha)\in\mathcal R_1\cup\mathcal R_2$ and $\ro(\beta)\in\mathcal R_2$, where $\mathcal R_1=\{\{1,4\},\{2,5\},\{3,6\}\}$ and $\mathcal R_2=\{[1,3],\{1,5,6\},\{2,4,6\},$ $[3,5]\}$. As $R\cap R_2\ne\emptyset$ for any $R\in\mathcal R_1\cup\mathcal R_2$ and any $R_2\in\mathcal R_2$, the pair $\{\alpha,\beta\}$ is row-based.

So, Proposition~\ref{general} yields $\achr(K_6\square K_q)\ge|C|=4s+9=2q+3$.
\end{proof}

\section{Auxiliary results}

If $M\in\mathcal M(p,q,C)$ and $\gamma\in C_l$, the \textit{excess} of the colour $\gamma$ is the number defined by
\[ \exc(\gamma)=l(p+q-l-1)-(|C|-1). \]

\begin{lemma}\label{bgen}
If $p,q\in[1,\infty)$, $C$ is a finite set, $M\in\mathcal M(p,q,C)$ and $\gamma\in C_l$, then the following hold:

$1.$ $l\le\min(p,q)$;

$2.$ $\exc(\gamma)\ge0$;

$3.$ $l=\frq(M)$ implies $|C|\le\lfloor\frac{pq}l\rfloor$.
\end{lemma}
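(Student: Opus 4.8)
The plan is to prove the three parts in order, each resting on a direct counting argument about where $\gamma$ and the remaining colours can sit in the matrix. The first and third parts are immediate; the second carries the real content.

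For part~1, I would observe that since every row of $M$ has pairwise distinct entries, $\gamma$ occupies at most one position in each row, so its $l$ occurrences lie in $l$ distinct rows and $l\le p$. The identical argument applied to columns gives $l\le q$, hence $l\le\min(p,q)$.

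For part~2, the key idea is to record the positions of $\gamma$. By part~1 these $l$ cells lie in $l$ distinct rows, forming the set $\ro(\gamma)$ with $|\ro(\gamma)|=l$, and in $l$ distinct columns, forming $\Cov(\gamma)$ with $\cov(\gamma)=l$. For any colour $\delta\ne\gamma$ the pair $\{\gamma,\delta\}$ must be good, so $\delta$ shares a row or a column with some occurrence of $\gamma$; equivalently $\delta$ occupies a position in the cross $S=(\ro(\gamma)\times[1,q])\cup([1,p]\times\Cov(\gamma))$. By inclusion--exclusion, $|S|=lq+pl-l^2=l(p+q-l)$, since the overlap $\ro(\gamma)\times\Cov(\gamma)$ has $l^2$ cells. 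Of these, exactly $l$ are occupied by $\gamma$ itself (all its cells lie in the overlap), leaving $l(p+q-l-1)$ cells available to the other colours. As each of the $|C|-1$ colours distinct from $\gamma$ must occupy at least one such cell, and distinct colours need distinct cells, we obtain $|C|-1\le l(p+q-l-1)$, which is precisely $\exc(\gamma)\ge0$.

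For part~3, assuming $l=\frq(M)$ means every colour has frequency at least $l$. Counting the $pq$ entries of $M$ by colour gives $pq=\sum_{\gamma\in C}\frq(\gamma)\ge l|C|$, so $|C|\le pq/l$, and integrality of $|C|$ yields $|C|\le\lfloor pq/l\rfloor$. The only delicate point in the whole lemma is the bookkeeping in part~2---counting the cross correctly via inclusion--exclusion and then subtracting the $l$ cells used by $\gamma$---so that is where I would be most careful; the remaining parts follow at once.
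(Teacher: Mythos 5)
Your proof is correct and takes essentially the same route as the paper: the paper merely normalises via Proposition~\ref{wlog} so that $(M)_{i,i}=\gamma$ for $i\in[1,l]$ and counts $S=\{(i,j):(i\le l\lor j\le l)\land i\ne j\}$ of size $ql+(p-l)l-l=l(p+q-l-1)$, which is exactly your cross $(\mathbb R(\gamma)\times[1,q])\cup([1,p]\times\Cov(\gamma))$ minus the $l$ cells occupied by $\gamma$, and parts~1 and~3 are argued identically (pigeonhole and the count $pq\ge l|C|$). Your inclusion--exclusion bookkeeping is sound, so nothing needs to change.
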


\begin{proof}
1. The assumption $l>\min(p,q)$ would mean, by the pigeonhole principle, that the colouring $f_M$ is not proper.

2. Because of Proposition~\ref{wlog} we may suppose without loss of generality $(M)_{i,i}=\gamma$ for all $i\in[1,l]$. For simplicity we use (w) to indicate that it is just Proposition~\ref{wlog}, which enables us to restrict our attention to matrices with a special property.

The colouring $f_M$ is complete, hence each of $|C|-1$ colours in $C\setminus\{\gamma\}$ must occupy a position in the set $S=\{(i,j):(i\le l\lor j\le l)\land i\ne j\}$. Thus, $|S|=ql+(p-l)l-l\ge |C|-1$ and $\exc(\gamma)=|S|-(|C|-1)\ge0$.

3. If $\gamma'\in C\setminus\{\gamma\}$, then $\frq(\gamma')\ge\frq(\gamma)=l$. Therefore, the total number of entries of the matrix $M$ is $pq\ge l|C|$, and the desired inequality follows. 
\end{proof}

From the proof of Lemma~\ref{bgen}.2 we see that the excess of a colour $\gamma\in C$ is equal to the maximum number of entries (other than $\gamma$) that can be deleted from $M$ so that each pair $\{\gamma,\gamma'\}\in\binom C2$ is good even in the ``partial matrix'' corresponding to the involved restriction of $f_M$.

The \textit{excess} of a matrix $M\in\mathcal M(p,q,C)$, denoted by $\exc(M)$, is the minimum of excesses of colours in $C$.

\begin{lemma}\label{exc}
If $p,q\in[1,\infty)$, $C$ is a finite set and $M\in\mathcal M(p,q,C)$, then $\exc(M)=\exc(\gamma)$, where $\gamma\in C$ and $\frq(\gamma)=\frq(M)$.
\end{lemma}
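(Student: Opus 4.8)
The plan is to exploit the fact that, once the matrix $M\in\mathcal M(p,q,C)$ is fixed, the parameters $p$, $q$ and $|C|$ are all constants, so the defining formula $\exc(\gamma)=l(p+q-l-1)-(|C|-1)$ for a colour $\gamma\in C_l$ depends on $\gamma$ only through its frequency $l=\frq(\gamma)$. I would make this explicit by introducing the auxiliary single-variable function $g(l)=l(p+q-l-1)-(|C|-1)$, so that $\exc(\gamma)=g(\frq(\gamma))$ for every $\gamma\in C$. The whole statement then reduces to a monotonicity claim: if $g$ is non-decreasing on the set of admissible frequencies, the minimum of $\exc(\gamma)$ over $\gamma\in C$ is attained at the smallest available frequency, which is $\frq(M)$ by definition, and this is precisely what the lemma asserts.

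First I would pin down the range of the relevant frequencies. Every colour of $C$ appears at least once (otherwise a pair containing it could never be good), and by Lemma~\ref{bgen}.1 we have $\frq(\gamma)\le\min(p,q)$ for each $\gamma$; hence every realised frequency lies in the integer interval $[1,\min(p,q)]$. Next I would establish monotonicity of $g$ on this interval by computing the discrete increment $g(l+1)-g(l)=(p+q-1)-(2l+1)=p+q-2-2l$, which is non-negative exactly when $l\le\frac{p+q}2-1$. The largest increment I need corresponds to $l=\min(p,q)-1$, and the inequality $\min(p,q)-1\le\frac{p+q}2-1$ holds because the minimum of two numbers never exceeds their average; consequently $g$ is non-decreasing throughout $[1,\min(p,q)]$.

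With monotonicity in hand the conclusion is immediate: since $\frq(M)$ is the smallest frequency realised by any colour and $g$ is non-decreasing, the minimum of $g(\frq(\gamma))$ over $\gamma\in C$ is attained at any colour $\gamma$ with $\frq(\gamma)=\frq(M)$, so $\exc(M)=\exc(\gamma)$ for such $\gamma$, as required.

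The only genuinely non-trivial point — hence the main obstacle — is verifying the monotonicity over the correct interval. The subtlety is that $g$ is a downward-opening parabola with vertex at $l=\frac{p+q-1}2$, so monotonicity would fail if the admissible frequencies reached beyond the vertex; the bound $\min(p,q)\le\frac{p+q}2$ is exactly what keeps the admissible range on the non-decreasing branch. In the boundary case $p=q$ one checks $g(\min(p,q))=g(\min(p,q)-1)$, so the two largest frequencies yield equal excesses, which remains perfectly consistent with the stated conclusion.
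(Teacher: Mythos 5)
Your proof is correct and takes essentially the same route as the paper: both reduce the lemma to monotonicity of $g(l)=l(p+q-l-1)-(|C|-1)$ over the admissible frequency range $[1,\min(p,q)]$ (justified by Lemma~\ref{bgen}.1), including the same boundary observation that $g(\min(p,q)-1)=g(\min(p,q))$ when $p=q$. The only cosmetic difference is that you check the discrete increment $g(l+1)-g(l)=p+q-2l-2$, whereas the paper views $g$ as a real quadratic increasing up to its vertex at $\frac{p+q-1}{2}$.
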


\begin{proof}
Let $m=\min(p,q)$. If $\alpha,\beta\in C$ and $k=\frq(\alpha)>\frq(\beta)$, then, by Lemma~\ref{bgen}.1, $k\le m$. As a consequence, $\exc(\alpha)\ge\exc(\beta)\ge\exc(f)$, since $\exc(\alpha)=g(k)$, where  $g(x)=x(p+q-x-1)-|C|+1$ is increasing in the interval $\langle1,\frac{p+q-1}2\rangle\supsetneqq\langle1,m-1\rangle$, and $p=q=m$ implies $g(m-1)=g(m)$.
\end{proof}

\begin{lemma}[see \cite{HoPu}, \cite{ChiF1}]\label{gen}
If $p,q\in[1,\infty)$ and $p\le q$, then 
\[ \achr(K_p\square K_q)\le\max(\min(l(p+q-l-1)+1,\lfloor pq/l\rfloor):l\in[1,p]).\ \qed \]
\end{lemma}

\begin{corollary}\label{K6}
If $q\in[7,\infty)$, then $\achr(K_6\square K_q)\le2q+7$.
\end{corollary} 

\begin{proof}
By Lemma~\ref{gen} with $p=6$ we obtain $\achr(K_6\square K_q)\le\max(q+5,2q+7,2q,\lfloor\frac{6q}4\rfloor,\lfloor\frac{6q}5\rfloor,q)=2q+7$. 
\end{proof}

\section{Properties of matrices in $\mathcal M(6,q,C)$}\label{prope}

Suppose we know that $\achr(K_6\square K_q)\ge2q+s-1$ for a pair $(q,s)$ 
with $q\in[7,\infty)$ and $s\in[1,\infty)$, and we want to prove that $\achr(K_6\square K_q)=2q+s-1$; clearly, because of Corollary~\ref{K6} it is sufficient to work with $s\le7$. Proceeding by the way of contradiction let $s'\in[s,7]$ satisfy $\achr(K_6\square K_q)=2q+s'$. By Theorem~\ref{ip} and Proposition~\ref{general} there is a $(2q+s)$-element set $C$ and a matrix $M\in\mathcal M(6,q,C)$. Our task will be accomplished by showing that the existence of $M$ leads to a contradiction. For that purpose we shall need properties of $M$. So in all claims of the present section we suppose that the notation corresponds to a matrix $M\in\mathcal M(6,q,C)$ with $q\in[7,\infty)$ and $|C|=2q+s\le2q+7$. We associate with $M$ an auxiliary graph $G$ with $V(G)=[1,6]$, in which $\{i,k\}\in E(G)$ if and only if $r(i,k)\ge1$.

\begin{claim}\label{+m}
The following statements are true:

$1.$ $c_1=0$;

$2.$ $c_l=0$ for $l\in[7,\infty)$;

$3.$ $c_2\ge3s$;

$4.$ $c_{3+}\le2q-2s$;

$5.$ $\sum_{i=3}^6 ic_i\le6q-6s$;

$6.$ $\frq(M)=2$;

$7.$ $\exc(M)=7-s$;

$8.$ $c_{4+}\le c_2-3s$;

$9.$ if $\{i,k\}\in\binom{[1,6]}2$, then $r(i,k)\le8-s$.
\end{claim}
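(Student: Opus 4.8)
The plan is to establish the nine statements in the listed order, since the later ones lean on the earlier. Parts~1 and~2 follow at once from Lemma~\ref{bgen}: an $l$-colour $\gamma$ has $\exc(\gamma)=l(q+5-l)-(2q+s-1)$, which for $l=1$ equals $5-q-s<0$ (as $q\ge7$), contradicting $\exc(\gamma)\ge0$, so $c_1=0$; and $l\le\min(6,q)=6$ rules out frequencies $\ge7$, giving Part~2. For Part~6, if $\frq(M)\ge3$ then Lemma~\ref{bgen}.3 forces $|C|\le\lfloor6q/3\rfloor=2q<2q+s$, a contradiction, so $\frq(M)=2$; Lemma~\ref{exc} with $l=2$ then gives $\exc(M)=2(q+3)-(2q+s-1)=7-s$, which is Part~7.

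For Parts~3,~4,~5 and~8 I would use two counting identities. Since Parts~1 and~2 confine every frequency to $[2,6]$, counting colours gives $\sum_{l=2}^{6}c_l=|C|=2q+s$ and counting entries gives $\sum_{l=2}^{6}l\,c_l=6q$. Forming $6q-2(2q+s)$ and $3(2q+s)-6q$ yields
\[
c_3+2c_4+3c_5+4c_6=2q-2s,\qquad c_2-c_4-2c_5-3c_6=3s.
\]
The second identity gives $c_2=3s+c_4+2c_5+3c_6\ge3s$ (Part~3) and $c_{4+}=c_4+c_5+c_6\le c_4+2c_5+3c_6=c_2-3s$ (Part~8); the first gives $c_{3+}\le c_3+2c_4+3c_5+4c_6=2q-2s$ (Part~4); and $\sum_{i=3}^{6}ic_i=6q-2c_2\le6q-6s$ via Part~3 (Part~5).

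The substantive step is Part~9, which I would treat by a visibility count refining the argument of Lemma~\ref{bgen}.2. Fix $\{i,k\}$ with $r(i,k)\ge1$ (otherwise the bound is immediate, as $s\le7$) and pick $\gamma\in\ro(i,k)$; being a $2$-colour present in rows $i$ and $k$, its two positions are $(i,a)$ and $(k,b)$ with $a\ne b$. Let $S$ be the set of entries other than these two lying in row $i$, row $k$, column $a$ or column $b$; inclusion–exclusion gives $|S|=(2q+12-4)-2=2q+6$. Completeness forces each colour of $C\setminus\{\gamma\}$ to occupy a position of $S$, so the $2q+6$ entries of $S$ realise all $2q+s-1$ of them, leaving exactly $(2q+6)-(2q+s-1)=7-s=\exc(\gamma)$ repeated occurrences. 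Every other $\delta\in\ro(i,k)$ is a $2$-colour with one entry in row $i$ and one in row $k$, both distinct from $(i,a),(k,b)$ and hence both inside $S$, so each contributes at least one repeat; as there are $r(i,k)-1$ such colours, $r(i,k)-1\le7-s$, i.e.\ $r(i,k)\le8-s$.

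The main obstacle is exactly the bookkeeping in Part~9: verifying that $|S|=2q+6$, that all of $C\setminus\{\gamma\}$ lands in $S$, and above all that both entries of each $\delta\in\ro(i,k)$ really fall inside $S$ so the count of repeats is tight. Once Parts~1,~2,~6 and~7 are secured the remaining deductions are routine linear arithmetic.
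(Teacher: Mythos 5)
Your proof is correct and follows essentially the same route as the paper: the same double counting of colours ($\sum_{l=2}^{6}c_l=2q+s$) and of matrix entries ($\sum_{l=2}^{6}lc_l=6q$) yields Parts 3--5 and 8, and your Part 9 is the paper's excess argument with the interpretation of $\exc(\gamma)$ (stated in the remark after Lemma~\ref{bgen}) re-derived explicitly via the count $|S|=2q+6$, each $\delta\in\ro(i,k)\setminus\{\gamma\}$ placing both of its occurrences in $S$ and thus contributing one repeat. The only cosmetic deviations are that you obtain Part 6 from Lemma~\ref{bgen}.3 (frequency at least $3$ would force $|C|\le\lfloor 6q/3\rfloor=2q<2q+s$) where the paper instead uses $c_1=0$ together with $c_2\ge3s>0$, and that you package the counting as two exact identities rather than inequality chains; both variants are equally valid.
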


\begin{proof}
1. If $c_1>0$, a 1-colour $\gamma\in C$ satisfies $\exc(\gamma)=q+4-(2q+s-1)=5-q-s<0$ in contradiction to Lemma~\ref{bgen}.2.

2. Use Lemma~\ref{bgen}.1.

3. By Claims~\ref{+m}.1 and \ref{+m}.2, counting the number of vertices of $K_6\square K_q$ we get $6q=\sum_{i=2}^6 ic_i$. Therefore, $3(2q+s)=3|C|=3(c_2+c_{3+})\le c_2+\sum_{i=2}^6 ic_i=c_2+6q$, which yields $c_2\ge3s$.

4. From $2(2q+s)+c_{3+}=2|C|+c_{3+}=2c_2+3c_{3+}\le\sum_{i=2}^6 ic_i=6q$ we obtain $c_{3+}\le2q-2s$.

5. The assertion of Claim~\ref{+m}.4 leads to $\sum_{i=3}^6 ic_i=\sum_{i=2}^6 ic_i-2c_2=6q-2c_2\le6q-6s$.

6. A consequence of Claims~\ref{+m}.1, \ref{+m}.3 and the assumption $s\in[1,7]$.

7. Since $\frq(M)=2$ (Claim~\ref{+m}.6), by Lemma~\ref{exc} we get $\exc(M)=2q+6-(2q+s-1)=7-s$.

8. We have $3(2q+s)-c_2+c_{4+}=3(c_2+c_3+c_{4+})-c_2+c_{4+}\le\sum_{i=2}^6 ic_i=6q$ and $c_{4+}\le c_2-3s$.

9. The inequality is trivial if $r(i,k)=0$. If $\gamma\in\ro(i,k)$, then each colour of $\ro(i,k)\setminus\{\gamma\}$ contributes one to the excess of $\gamma$, hence, by Claims~\ref{+m}.6 and \ref{+m}.7, $r(i,k)-1\le\exc(\gamma)=\exc(M)=7-s$ and $r(i,k)\le8-s$.
\end{proof}

\begin{claim}\label{exci}
If $\{i,k\}\in\binom{[1,6]}2$ and $r(i,k)\ge1$, then $r(i,k)+r_{3+}(i,k)\le8-s$.
\end{claim}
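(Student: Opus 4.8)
The plan is to pick a witness colour and re-run the excess-counting argument behind Claim~\ref{+m}.9, but this time charging the higher-frequency colours shared by rows $i$ and $k$ against the excess as well. First I would choose $\gamma\in\ro(i,k)$, which is possible since $r(i,k)\ge1$. By definition $\gamma$ is a $2$-colour, so $\frq(\gamma)=2=\frq(M)$ by Claim~\ref{+m}.6, whence Lemma~\ref{exc} together with Claim~\ref{+m}.7 gives $\exc(\gamma)=\exc(M)=7-s$. Because $\gamma$ appears in both row $i$ and row $k$ and has frequency $2$, its two occurrences are a single entry $(i,a)$ in row $i$ and a single entry $(k,b)$ in row $k$, and properness forces $a\ne b$ (two equal colours in one column are forbidden).

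Next I would set up the counting exactly as in the proof of Lemma~\ref{bgen}.2. Let $S$ be the set of all positions sharing a row or a column with an occurrence of $\gamma$, with the two $\gamma$-positions themselves removed; that proof gives $|S|=2q+6$ for $p=6$, $l=2$. Since $f_M$ is complete, every colour of $C\setminus\{\gamma\}$ occupies at least one position of $S$, while $\gamma$ occupies none. Writing $m(c)$ for the number of occurrences of $c$ in $S$, this yields
\[ \sum_{c\in C\setminus\{\gamma\}}(m(c)-1)=|S|-(|C|-1)=\exc(\gamma)=7-s, \]
a sum of non-negative terms.

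The crux is then to show that each colour of $(\ro(i,k)\setminus\{\gamma\})\cup\ro_{3+}(i,k)$ contributes at least $1$ to this sum. Let $\delta$ be any such colour. It lies in $\ro(i)\cap\ro(k)$ and is distinct from $\gamma$ (either $\delta$ is a $2$-colour different from $\gamma$, or $\delta\in C_{3+}$ while $\gamma\in C_2$). Hence $\delta$ has an occurrence $(i,c)$ in row $i$ with $c\ne a$ and an occurrence $(k,d)$ in row $k$ with $d\ne b$; both of these lie in $S$ and are distinct, so $m(\delta)\ge2$ and $m(\delta)-1\ge1$. The two colour families are disjoint, since one consists of $2$-colours and the other of colours of frequency at least $3$, and neither contains $\gamma$; there are $(r(i,k)-1)+r_{3+}(i,k)$ of them in total. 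Summing their contributions against the displayed identity gives
\[ (r(i,k)-1)+r_{3+}(i,k)\le\sum_{c\in C\setminus\{\gamma\}}(m(c)-1)=7-s, \]
that is, $r(i,k)+r_{3+}(i,k)\le8-s$, as required.

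I expect the only delicate point to be the verification that both occurrences of each shared colour $\delta$ genuinely land in $S$ and are distinct from each other: this is precisely what upgrades the plain estimate $r(i,k)\le8-s$ of Claim~\ref{+m}.9 to the sharper bound, by letting the higher-frequency shared colours also consume excess. Everything else is the bookkeeping already implicit in the proof of Lemma~\ref{bgen}.2.
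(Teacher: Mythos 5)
Your proposal is correct and takes essentially the same route as the paper: both pick a witness $\gamma\in\ro(i,k)$ and note that each colour of $(\ro(i,k)\setminus\{\gamma\})\cup\ro_{3+}(i,k)$, having one occurrence in row $i$ and one in row $k$ away from $\gamma$'s two positions, contributes at least one to $\exc(\gamma)=\exc(M)=7-s$, giving $r(i,k)-1+r_{3+}(i,k)\le7-s$. You simply make explicit the bookkeeping (the set $S$ and the multiplicities $m(c)$) that the paper leaves implicit via its remark, following Lemma~\ref{bgen}, interpreting the excess as deletable entries.
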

\bp With $\gamma\in\ro(i,k)$ each colour of $(\ro(i,k)\setminus\{\gamma\})\cup\ro_{3+}(i,k)$ makes a contribution of one to the excess of $\gamma$, hence $r(i,k)-1+r_{3+}(i,k)\le\exc(\gamma)=\exc(M)\le7-s$, and the claim follows. $\ep$

\begin{claim}\label{r2r*}
If $\{i,k\}\in\binom{[1,6]}2$, $r(i,k)\ge1$, $B\subseteq[1,6]$, $3\le|B|\le4$ and $B\cap\{i,k\}=\emptyset$, then $r(B)\le r^*(B)\le2|B|$.
\end{claim}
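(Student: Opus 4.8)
The first inequality is essentially a triviality and should be disposed of immediately: since $B\in\binom{B}{|B|}$ and the outer union defining $\ro^*(B)$ runs over $l\in[2,|B|]$, the term with $l=|B|$ contributes exactly $\ro(B)$ (as $\binom{B}{|B|}=\{B\}$), so $\ro(B)\subseteq\ro^*(B)$ and $r(B)\le r^*(B)$. All the content is in the bound $r^*(B)\le2|B|$.

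My plan is to first reinterpret the sets involved through the \emph{row set} $\ro(\gamma)=\{l\in[1,6]:\gamma\in\ro(l)\}$ of a colour. Because $M$ is proper, every colour occurs at most once in each row, so $\frq(\gamma)=|\ro(\gamma)|$; consequently $\gamma\in\ro(A)$ forces both $A\subseteq\ro(\gamma)$ and $|\ro(\gamma)|=\frq(\gamma)=|A|$, whence $\ro(\gamma)=A$. Therefore every $\gamma\in\ro^*(B)$ satisfies $\ro(\gamma)\subseteq B$, and in particular $\ro(\gamma)\cap\{i,k\}=\emptyset$ since $B\cap\{i,k\}=\emptyset$.

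The pivotal step exploits the hypothesis $r(i,k)\ge1$. I would fix a colour $\gamma_0\in\ro(i,k)$; by definition $\gamma_0$ is a $2$-colour with $\ro(\gamma_0)=\{i,k\}$, so its two occurrences lie in row $i$ and row $k$, necessarily in two \emph{distinct} columns $m\ne n$ (a single column cannot carry a repeated colour). Now take any $\gamma\in\ro^*(B)$; note $\gamma\ne\gamma_0$ because $\ro(\gamma)\subseteq B$ is disjoint from $\{i,k\}=\ro(\gamma_0)$. As observed, $\gamma$ and $\gamma_0$ share no row, so the pair $\{\gamma,\gamma_0\}$---good because $M\in\cM(6,q,C)$---must be \emph{column-based}. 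Since $\gamma_0$ appears only in columns $m$ and $n$, the colour $\gamma$ must occupy a cell of column $m$ or of column $n$, and that cell lies in a row of $B$ because $\ro(\gamma)\subseteq B$.

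It remains to count. Each colour of $\ro^*(B)$ occupies at least one cell of the set $S=\{(l,m):l\in B\}\cup\{(l,n):l\in B\}$, which has exactly $2|B|$ cells because $m\ne n$. As $S$ carries at most $2|B|$ distinct colours (one per cell), the inclusion of $\ro^*(B)$ into the set of colours occupying a position in $S$ yields $r^*(B)\le2|B|$. I expect no serious obstacle: the only points requiring care are the identity $\ro(\gamma)=A$ for $\gamma\in\ro(A)$ (which rests on row-properness together with $\frq(\gamma)=|\ro(\gamma)|$) and the remark that $\gamma_0$ occupies exactly two \emph{distinct} columns, after which the good-pair condition forces every confined colour into just $2|B|$ cells.
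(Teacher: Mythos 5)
Your proof is correct and takes essentially the same route as the paper's: fix a $2$-colour $\gamma_0\in\ro(i,k)$ occupying two distinct columns, note that every colour of $\ro^*(B)$ appears only in rows of $B$ (hence shares no row with $\gamma_0$), so goodness forces it into one of the at most $2|B|$ cells of $B\times\{m,n\}$. Your explicit verification that $\gamma\in\ro(A)$ implies the row set of $\gamma$ equals $A$ merely spells out a step the paper leaves implicit ("the colour $\beta$ appears in none of the rows $i,k$"), so there is no substantive difference.
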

\bp Consider a colour $\gamma\in\ro(i,k)$ with $(M)_{i,j}=(M)_{k,l}=\gamma$ (where, of course, $j\ne l$). If $\beta\in\ro^*(B)$, there is $A\subseteq B$ with $|A|\ge2$ and $\beta\in\ro(A)$. The colour $\beta$ appears in none of the rows $i,k$, hence the pair $\{\beta,\gamma\}$ is good in $M$ only if $\beta$ occupies a position in the set $\bigcup_{m\in A}\{(m,j),$ $(m,l)\}\subseteq\bigcup_{m\in B}\{(m,j),(m,l)\}$. As a consequence, $r^*(B)=|\ro^*(B)|\le|\bigcup_{m\in B}\{(m,j),(m,l)\}|=2|B|$, and the inequality $r(B)\le r^*(B)$ follows from the fact that $\ro(B)\subseteq\ro^*(B)$. $\ep$

\begin{claim}\label{r3r3}
If $\{i,j,k,l,m,n\}=[1,6]$ and $r(i,j,k)\ge1$, then $r(l,m,n)\le9$.
\end{claim}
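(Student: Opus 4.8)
The plan is to exploit a single witness colour living in rows $i,j,k$ and then to count, via the goodness condition, how many colours of $\ro(l,m,n)$ that witness can force into a small block of $M$. Since $r(i,j,k)\ge1$, I would fix a colour $\gamma\in\ro(i,j,k)$; by definition $\gamma$ is a $3$-colour occurring in each of the rows $i$, $j$, $k$. Because every column of $M$ has six pairwise distinct entries, $\gamma$ appears at most once per column, so its three occurrences lie in three distinct columns, say $a$, $b$, $c$.

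Next I would take an arbitrary colour $\delta\in\ro(l,m,n)$. As $\{i,j,k,l,m,n\}=[1,6]$, the triples $\{i,j,k\}$ and $\{l,m,n\}$ partition $[1,6]$; hence $\gamma$ occurs only in rows $i,j,k$ while $\delta$ occurs only in rows $l,m,n$, so $\gamma$ and $\delta$ never share a row. Consequently the pair $\{\gamma,\delta\}$, which is good in $M$, cannot be row-based and must therefore be column-based. This means $\delta$ appears in some column that also contains $\gamma$, i.e.\ in one of the columns $a$, $b$, $c$; and since $\delta$ lives only in rows $l,m,n$, this appearance occupies a cell of the $3\times3$ block $\{l,m,n\}\times\{a,b,c\}$.

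Finally I would assign to each $\delta\in\ro(l,m,n)$ one such cell, obtaining a map into a set of nine positions of $M$. This map is injective, since two distinct colours can never occupy the same position. Hence $r(l,m,n)=|\ro(l,m,n)|\le9$, as required.

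The argument is essentially a one-shot pigeonhole count over the nine-cell block, so I do not expect a genuine obstacle. The only two points that demand care are, first, the justification that $\{\gamma,\delta\}$ must be column-based — which rests entirely on the row-supports of $\gamma$ and $\delta$ being disjoint, a direct consequence of $\{i,j,k\}$ and $\{l,m,n\}$ being complementary — and, second, confirming that the three occurrences of $\gamma$ sit in three \emph{distinct} columns, so that the block has exactly nine cells; this last point uses only the column-properness of $M$.
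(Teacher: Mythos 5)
Your proof is correct and follows essentially the same route as the paper: the paper also fixes $\alpha\in\ro(i,j,k)$ and observes that every $\beta\in\ro(l,m,n)$, being confined to rows $l,m,n$, can only form a good (necessarily column-based) pair with $\alpha$ by occupying a cell of the $9$-element set $\{l,m,n\}\times\Cov(\alpha)$. Your write-up merely makes explicit the details the paper leaves implicit (disjoint row supports forcing column-basedness, the three distinct columns of $\gamma$, and injectivity of the cell assignment).
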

\bp There is nothing to prove if $\ro(l,m,n)=\emptyset$. Further, with $\alpha\in\ro(i,j,k)$ and $\beta\in\ro(l,m,n)$ the pair $\{\alpha,\beta\}$ is good in $M$ only if the colour $\beta$ occupies a position in the 9-element set $\{l,m,n\}\times\Cov(\alpha)$. $\ep$

\begin{claim}\label{max4+}
If $\Delta(G)\ge4$, then $q\le40-5s$.
\end{claim}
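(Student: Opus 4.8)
The plan is to pin the argument to a single row, namely the one whose vertex in $G$ realises the large degree, and to count the colours occurring in that row. After relabelling rows and columns (Proposition~\ref{wlog}) I may assume that vertex $1$ satisfies $\deg_G(1)\ge4$ and that $\{2,3,4,5\}\subseteq N_G(1)$, so that $r(1,k)\ge1$ for every $k\in[2,5]$. The starting identity is that the $q$ distinct colours of row $1$ split into two-colours and higher colours, $q=r_2(1)+r_{3+}(1)$, together with $r_2(1)=\sum_{k=2}^6 r(1,k)$, since a two-colour in row $1$ pairs row $1$ with exactly one further row.

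The key step is to notice that every colour of $\ro_{3+}(1)$ has frequency at least $3$, hence occurs in at least two of the rows $2,\dots,6$ besides row $1$. Counting incidences therefore gives $\sum_{k=2}^6 r_{3+}(1,k)=\sum_{\delta\in\ro_{3+}(1)}(\frq(\delta)-1)\ge2r_{3+}(1)$. Dropping the single term indexed by $k=6$ from the two full sums $\sum_{k=2}^6 r(1,k)$ and $\sum_{k=2}^6 r_{3+}(1,k)$, I obtain $\sum_{k=2}^5\bigl(r(1,k)+r_{3+}(1,k)\bigr)\ge q+r_{3+}(1)-r(1,6)-r_{3+}(1,6)$.

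Next I would apply Claim~\ref{exci} to each of the four pairs $\{1,k\}$, $k\in[2,5]$ (all admissible because $r(1,k)\ge1$), and sum to get $\sum_{k=2}^5\bigl(r(1,k)+r_{3+}(1,k)\bigr)\le4(8-s)=32-4s$. Combining this with the previous inequality yields $q\le32-4s+r(1,6)+r_{3+}(1,6)-r_{3+}(1)$.

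The main obstacle — and the reason only four of the five pairs at vertex $1$ are used — is to control the two leftover ``row-$6$'' terms, which is delicate precisely when $6\notin N_G(1)$ (degree exactly $4$), since then Claim~\ref{exci} says nothing about the pair $\{1,6\}$. Both terms are nevertheless harmless: $\ro_{3+}(1,6)\subseteq\ro_{3+}(1)$ forces $r_{3+}(1,6)-r_{3+}(1)\le0$, while Claim~\ref{+m}.9 gives $r(1,6)\le8-s$ unconditionally. Substituting, $q\le32-4s+(8-s)=40-5s$, as claimed. The only point needing care is the bookkeeping: verifying that omitting row $6$ from the sums costs at most the single extra term $r(1,6)$, and that this term is bounded by $8-s$ regardless of whether $\{1,6\}$ is an edge of $G$.
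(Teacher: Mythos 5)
Your proof is correct and takes essentially the same route as the paper: both arguments decompose $q=r_2(1)+r_{3+}(1)$, use the fact that each colour of $\ro_{3+}(1)$ lies in at least two rows besides row $1$, and bound $r_{3+}(1,k)\le 8-s-r(1,k)$ via Claim~\ref{exci} for the neighbour pairs $\{1,k\}$. The only difference is bookkeeping: the paper sorts the sequence $(r(1,k))_{k=2}^6$ and applies a union bound over the $7-p\ge4$ neighbour rows to get $q\le(7-p)(8-s)\le40-5s$, whereas you double-count incidences ($\sum_k r_{3+}(1,k)\ge2r_{3+}(1)$) and absorb the leftover row-$6$ terms with $r_{3+}(1,6)\le r_{3+}(1)$ and the unconditional bound $r(1,6)\le8-s$ from Claim~\ref{+m}.9, which neatly avoids the paper's case distinction between $\deg_G(1)=4$ and $\deg_G(1)=5$.
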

\bp Suppose (w) $\Delta(G)=\deg_G(1)\ge4$, and, moreover, let (w) the sequence $(r(1,k))_{k=2}^6$ be nondecreasing. Then $r(1,3)\ge1$, and there is $p\in[2,3]$ such that $r(1,p)\ge1$ and $r(1,k)=0$ for $k\in[2,p-1]$. Clearly, we have $\ro_{3+}(1)=\bigcup_{k=p}^{6}\ro_{3+}(1,k)$. Realise that, by Claim~\ref{+m}.1, $q=|\ro(1)|=r_2(1)+r_{3+}(1)$. The inequality $r(1,k)\ge1$ for $k\in[p,6]$ yields, by Claim~\ref{exci}, $r_{3+}(1,k)\le8-s-r(1,k)$; therefore,
\begin{align*}
q-r_2(1)&=r_{3+}(1)=\left|\bigcup_{k=p}^{6}\ro_{3+}(1,k)\right|\le\sum_{k=p}^6 r_{3+}(1,k)\\
&\le\sum_{k=p}^6[8-s-r(1,k)]=(7-p)(8-s)-\sum_{k=p}^6 r(1,k),
\end{align*}
and then, since $r_2(1)=\sum_{k=p}^6 r(1,k)$, we finish with  $q\le(7-p)(8-s)\le40-5s$. $\ep$

\begin{claim}\label{D3}
If $\Delta(G)=3$, $\{i,j,k,l,m,n\}=[1,6]$, $r(i,l)\ge1$, $r(j,l)\ge1$ and $r(k,l)\ge1$, then $r(l,m,n)\ge q+3s-24$.
\end{claim}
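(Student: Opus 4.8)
The plan is to analyse row $l$ by colour frequency and by which other rows its colours meet. First I would note that $\Delta(G)=3$ together with $r(i,l),r(j,l),r(k,l)\ge1$ forces $i,j,k$ to be the only neighbours of $l$ in $G$, so $r(l,m)=r(l,n)=0$: no $2$-colour of row $l$ is shared with row $m$ or row $n$. Since $c_1=0$ by Claim~\ref{+m}.1, every colour of row $l$ has frequency at least $2$, and because the entries in each row of $M$ are distinct, a colour of frequency $f$ occupies exactly $f$ distinct rows. Splitting $\ro(l)$ into $\ro_2(l)$ and $\ro_{3+}(l)$ therefore gives $q=r_2(l)+r_{3+}(l)$.

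Next I would handle the $2$-colours. Each $\gamma\in\ro_2(l)$ has its unique second occurrence in one specific row, so $\ro_2(l)$ is the disjoint union of the sets $\ro(l,k')$ over $k'\in[1,6]\setminus\{l\}$; using $r(l,m)=r(l,n)=0$ this yields
\[
r_2(l)=r(i,l)+r(j,l)+r(k,l),\qquad r_{3+}(l)=q-r(i,l)-r(j,l)-r(k,l).
\]

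The key step is to locate $\ro(l,m,n)$ inside $\ro_{3+}(l)$. I would show that a colour of $\ro_{3+}(l)$ avoiding all of rows $i,j,k$ is necessarily a $3$-colour lying in rows $l,m,n$: its remaining $f-1\ge2$ rows must all belong to $\{m,n\}$, which forces $f=3$ and those rows to be exactly $m$ and $n$. Conversely, every colour of $\ro(l,m,n)$ occupies only rows $l,m,n$ and hence avoids $i,j,k$. Thus $\ro(l,m,n)$ is exactly the set of colours of $\ro_{3+}(l)$ meeting none of $i,j,k$, so
\[
r(l,m,n)=r_{3+}(l)-\bigl|\ro_{3+}(i,l)\cup\ro_{3+}(j,l)\cup\ro_{3+}(k,l)\bigr|,
\]
which by subadditivity is at least $r_{3+}(l)-r_{3+}(i,l)-r_{3+}(j,l)-r_{3+}(k,l)$.

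Finally I would substitute the formula for $r_{3+}(l)$ and apply Claim~\ref{exci} to each pair $\{i,l\},\{j,l\},\{k,l\}$ (valid since every $r(\cdot,l)\ge1$), giving $r(p,l)+r_{3+}(p,l)\le8-s$ for $p\in\{i,j,k\}$; summing the three bounds yields
\[
r(l,m,n)\ge q-\sum_{p\in\{i,j,k\}}\bigl(r(p,l)+r_{3+}(p,l)\bigr)\ge q-3(8-s)=q+3s-24.
\]
The point demanding the most care is the structural identification in the previous paragraph: one must exploit that a colour occupies exactly one cell per row, so that its frequency equals the number of rows it uses, both to exclude frequency-$4$ colours of row $l$ that avoid $i,j,k$ and to see that avoiding $i,j,k$ pushes a $3$-colour simultaneously into rows $m$ and $n$.
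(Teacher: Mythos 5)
Your proposal is correct and follows essentially the same route as the paper: both decompose $\ro(l)$ as $\ro_2(l)\cup\ro_{3+}(l)$ using $\deg_G(l)=3$ (so $\ro(l,m)=\ro(l,n)=\emptyset$), identify the $3+$-colours of row $l$ avoiding rows $i,j,k$ with $\ro(l,m,n)$, and then bound $r_{3+}(i,l),r_{3+}(j,l),r_{3+}(k,l)$ via Claim~\ref{exci} so that the $r(\cdot,l)$ terms cancel against $r_2(l)$. Your only addition is spelling out the frequency-counts-rows argument (excluding frequency-$4$ colours confined to $\{l,m,n\}$) that the paper leaves implicit, and proving the decomposition as an equality where the paper needs only an inclusion.
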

\bp We have $\Delta(G)=\deg_G(l)$, $\ro(l,m)=\ro(l,n)=\emptyset$, $\ro(l)=\ro_2(l)\cup\ro_{3+}(l)$, $\ro_2(l)=\ro(i,l)\cup\ro(j,l)\cup\ro(k,l)$ and $\ro_{3+}(l)=\ro(l,m,n)\cup\ro_{3+}(i,l)\cup\ro_{3+}(j,l)\cup\ro_{3+}(k,l)$. Proceeding similarly as in the proof of Claim~\ref{max4+} leads to $q-r_2(l)=r_{3+}(l)\le r(l,m,n)+[8-s-r(i,l)]+[8-s-r(j,l)]+[8-s-r(k,l)]=r(l,m,n)+3(8-s)-r_2(l)$, which yields the desired result. $\ep$

\section{Main theorem}

\begin{theorem}\label{main}
If $q\in[41,\infty)$ and $q\equiv1\pmod2$, then $\achr(K_6\square K_q)=2q+3$.
\end{theorem}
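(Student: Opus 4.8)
The lower bound $\achr(K_6\square K_q)\ge 2q+3$ is exactly Proposition~\ref{lb}, so the whole task is the matching upper bound $\achr(K_6\square K_q)\le 2q+3$. The plan is to run the contradiction scheme set up in Section~\ref{prope} with $s=4$ (so that $2q+s-1=2q+3$): assuming the bound fails, Theorem~\ref{ip} and Proposition~\ref{general} hand us a set $C$ with $|C|=2q+4$ and a matrix $M\in\cM(6,q,C)$, and I must show that $M$ cannot exist. First I would record the specialisations of the Section~\ref{prope} claims at $s=4$: $\frq(M)=2$ and $\exc(M)=3$ (Claim~\ref{+m}.6,~\ref{+m}.7), $c_2\ge 12$ and $c_{4+}\le c_2-12$ (Claims~\ref{+m}.3,~\ref{+m}.8), and $r(i,k)\le 4$, $r(i,k)+r_{3+}(i,k)\le 4$ for every edge of $G$ (Claim~\ref{+m}.9, Claim~\ref{exci}). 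The decisive first reduction is that, since $q\ge 41>40-5\cdot 4=20$, Claim~\ref{max4+} forbids $\Delta(G)\ge 4$; hence $\Delta(G)\le 3$ and the proof splits according to whether $\Delta(G)=3$ or $\Delta(G)\le 2$.

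The case $\Delta(G)=3$ I expect to settle cleanly. Choosing $l$ with $\deg_G(l)=3$, neighbours $i,j,k$ and non-neighbours $m,n$, Claim~\ref{D3} yields a family of $3$-colours $R=\ro(l,m,n)$ with $|R|\ge q-12\ge 29$. Because $q-12>9$, Claim~\ref{r3r3} applied to the complementary triple forces $r(i,j,k)=0$. The engine is then a column-based pairing argument: a colour whose occupied rows all lie inside $\{i,j,k\}$ shares no row with any $\alpha\in R$, so each such pair must be column-based; having frequency at most $3$ it meets at most $3\cdot 3=9$ colours of $R$ through common columns, which is impossible since $|R|\ge 29$. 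Thus every colour occurring in rows $i,j,k$ also occurs in $\{l,m,n\}$. Since $R$ already fills all but at most $12$ cells of each of the rows $l,m,n$, at most $36$ colours are available to be shared, and each contributes at most $3$ cells to rows $i,j,k$; counting the $3q$ cells there gives $3q\le 108$, i.e.\ $q\le 36$, contradicting $q\ge 41$.

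The genuinely hard part is $\Delta(G)\le 2$, where $G$ is a disjoint union of paths and cycles on $[1,6]$ carrying at least three edges. Here there is no single dominating triple to drive the previous argument: from $r_{3+}(i)=q-r_2(i)\ge q-8$ and $c_{4+}\le c_2-12$ one learns that almost all of the $2q+4$ colours are $3$-colours and that every row is saturated by them, yet this bulk is spread over the $20$ triples rather than concentrated. The tools I would deploy are Claim~\ref{r2r*}, giving $r(T)\le 6$ for every triple $T$ whose complement contains an edge, and Claim~\ref{r3r3}, preventing a complementary pair of triples from both being large; together with the extra-cell estimate that a triple $T$ with $r(T)\ge 10$ must meet \emph{every} colour (forcing $2r(T)\le 3q-|C|=q-4$), these pin down which triples may be populated. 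Converting this into the required contradiction asks for the column-based pairing of the $\Delta(G)=3$ case to be applied to large families of $3$-colours sharing a common pair or triple of rows, and, because the admissible patterns of populated triples depend on where the few edges of $G$ sit, the argument has to be organised into subcases according to the isomorphism type of $G$ (perfect matching, $P_6$, $C_6$, $C_3\cup C_3$, $C_4\cup K_2$, $P_3\cup P_3$ and their substructures). I expect this bookkeeping, rather than any single inequality, to be the main obstacle, the delicate point being to keep the scarce $2$-colours and $4^+$-colours accounted for while showing that no distribution of the $3$-colours compatible with Claims~\ref{r2r*} and~\ref{r3r3} can reach $2q+4$ colours.
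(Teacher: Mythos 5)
Your setup (contradiction via Theorem~\ref{ip} and Proposition~\ref{general} with $|C|=2q+4$, the $s=4$ specialisations of Claim~\ref{+m}, and the reduction $\Delta(G)\le3$ from Claim~\ref{max4+}) matches the paper exactly, and the one case you actually prove, $\Delta(G)=3$, is correct and genuinely different from the paper's Claim~\ref{Dle2}. Your argument is sound: every colour of $R=\ro(l,m,n)$ has all three occurrences in rows $l,m,n$; a colour confined to rows $\{i,j,k\}$ has frequency at most $3$, hence at most $9$ column-partners in $R$, impossible since $|R|\ge q-12\ge29$ by Claim~\ref{D3}; so every colour of rows $i,j,k$ is one of the at most $3\cdot12=36$ non-$R$ colours of rows $l,m,n$, and $3q\le36\cdot3=108$ contradicts $q\ge41$. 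This is shorter and more transparent than the paper's route, which instead bounds $c_2\le30$, sums $\rho^*=\sum_{i=4}^6 r^*([2,6]\setminus\{i\})\le24$ and derives $c_{4+}\ge q-20>c_2-12$.

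The genuine gap is the case $\Delta(G)\le2$, which is the bulk of the paper (Claims~\ref{noK2}--\ref{noC6}) and which you only outline; moreover, the outline as stated cannot be completed, because it never invokes $q\equiv1\pmod2$. That hypothesis is indispensable, not decorative: in the perfect-matching subcase ($G\supseteq3K_2$ with $r(i,i+1)=4$, paper's Claim~\ref{noK2}, $d=1$, $|T|=0$) one arrives at the configuration where exactly the triples $(1,3,5),(1,4,6),(2,3,6),(2,4,5)$ carry $3$-colours and $r_3(i)=q-4$ for every row $i$; the six row equations then force all four values to equal $(q-4)/2$, a distribution fully consistent with Claims~\ref{r2r*} and \ref{r3r3}, with your estimate $2r(T)\le3q-|C|=q-4$, and with every count of $2$- and $4^+$-colours --- indeed the lower-bound matrix of Proposition~\ref{lb} realises an entirely analogous pattern (three disjoint $2$-colour edges plus four triples, each triple meeting each edge). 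Only parity kills it: $q$ odd makes $q-4$ odd, whence the contradiction (\ref{par}). So no amount of inequality bookkeeping over the isomorphism types of $G$ can close this subcase, and several other subcases need tools absent from your list as well (e.g.\ in Claim~\ref{noPn}, $p=4$ and $p=5$ are settled by positional arguments about $\Cov$-sets and occupied cells, not by frequency counts). As it stands, you have a nice alternative proof of the easier half of the case distinction, but the theorem itself remains unproven.
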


\begin{proof}
We proceed by the way of contradiction. As mentioned in the beginning of Section~\ref{prope}, we have to show that the existence of a matrix $M\in\mathcal M(6,q,C)$, where $C$ is a $(2q+4)$-element set of colours, leads to a contradiction. First notice that, by Claim~\ref{+m}, all colours of $C$ are of frequency $l\in[2,6]$, $c_2\ge12$, $c_{3+}\le2q-8$, $\sum_{i=3}^{6}ic_i\le6q-24$, $\frq(M)=2$, $\exc(M)=3$, and $\{i,k\}\in\binom{[1,6]}{2}$ implies $r(i,k)\le4$.

Since $q\ge41$, from Claim~\ref{max4+} we know that $\Delta(G)\le3$ for the auxiliary graph $G$. Besides that, $\deg_G(i)=d$ implies $r_2(i)\le4d$ for $i\in[1,6]$. 

\begin{claim}\label{Dle2}
$\Delta(G)\le2$.
\end{claim}
\bp If $\Delta(G)=3$, (w) $\deg_G(1)=3$, $r(1,4)\ge1$, $r(1,5)\ge1$ and $r(1,6)\ge1$. By Claim~\ref{D3} we have $r(1,2,3)\ge q-12\ge29$, and so Claim~\ref{r2r*} yields $r(4,5)=r(4,6)=r(5,6)=0$ (if $r(i,k)\ge1$ for $\{i,k\}\in\binom{[4,6]}2$, then $29\le r(1,2,3)\le r^*([1,3])\le2|[1,3]|=6$, a contradiction). Moreover, $r(4,5,6)=0$, for otherwise, by Claim~\ref{r3r3}, $30\le r(1,2,3)+r(4,5,6)\le18$, a contradiction. 

There is no $i\in[4,6]$ with $\deg_G(i)=3$, because then, again by Claim~\ref{D3}, $r(4,5,6)\ge q-12\ge29$ in contradiction to Claim~\ref{r3r3}. So, $\deg_G(i)\le2$, $i=4,5,6$, $2c_2=\sum_{i=1}^6 r_2(i)\le3\cdot12+3\cdot8=60$ and $c_2\le30$. Since $r(1,i)\ge1$, Claim~\ref{r2r*} yields $r^*([2,6]\setminus\{i\})\le2|2,6]\setminus\{i\}|=8$, $i=4,5,6$, and then $\rho^*=\sum_{i=4}^6 r^*([2,6]\setminus\{i\})\le24$. 

It is easy to see that among summands of type $r(A)$ with $A\subseteq[2,6]$, $2\le|A|\le3$, that appear when counting $\rho^*$, each of $r(A)$ with $A\in\binom{[2,6]}2\setminus\{\{4,5\},\{4,6\},\{5,6\}\}$ appears at least twice, and each of $r(A)$ with $A\in\binom{[2,6]}3\setminus\{\{4,5,6\}\}$ (which is a set belonging to $C_3\setminus\ro_3(1)$) appears at least once. Because of $r(4,5)=r(4,6)=r(5,6)=r(4,5,6)=0$ this leads to $2\sum_{\{i,k\}\in\binom{[2,6]}2} r(i,k)+c_3-r_3(1)\le \rho^*\le24$, which, having in mind that $2\sum_{\{i,k\}\in\binom{[2,6]}2} r(i,k)=2c_2-2r_2(1)$, yields $2c_2-2r_2(1)+c_3-r_3(1)\le24$. Together with the inequality $r_2(1)+r_3(1)\le q$ then $c_2+c_3\le q+24+r_2(1)-c_2\le q+24$, $2q+4=|C|=c_2+c_3+c_{4+}\le q+24+c_{4+}$, and so $c_{4+}\ge q-20\ge21$ in contradiction to $c_{4+}\le c_2-12\le18$. $\ep$
\vskip2mm

By Claim~\ref{Dle2} each component of $G$ is either a path or a cycle. 

\begin{claim}\label{noK2}
No component of the graph $G$ is $K_2$.
\end{claim}
\bp Let (w) $G$ have a component $K_2$ with vertex set $[1,2]$. Then $r(1,2)\in[1,4]$ and $r(i,k)=0$ for $(i,k)\in[1,2]\times[3,6]$. Further, (w) $\Cov(\ro(1,2))=[1,n]$ with $n\in[2,8]$.

If $r(1,2)\in[1,3]$, Claim~\ref{r2r*} yields $\rho=\sum_{\{i,k\}\in\binom{[3,6]}2}r(i,k)\le r^*([3,6])\le8$; then $c_2=r(1,2)+\rho\le11$, a contradiction.

If $r(1,2)=4$, then $n\ge4$ and $\rho=|C_2\setminus\ro(1,2)|=8$. In the case $n\in[5,8]$ there is $j\in[1,n]$ such that $\co(j)$ contains at most $\lfloor\frac{2\cdot8}n\rfloor\le3$ colours of $C_2\setminus\ro(1,2)$. Then, however, for a colour $\gamma\in\ro(1,2)\cap\co(j)$ the number of colours $\delta\in C_2\setminus\ro(1,2)$, for which the pair $\{\gamma,\delta\}$ is good in $M$, is at most seven, a contradiction.

Therefore $n=4$, 2-colours occupy all positions in $[1,6]\times[1,4]$, $c_2=12$, $c_{4+}=0$, and all positions in the set $[1,6]\times[5,q]$ are occupied by 3-colours. Among other things this means that 
\begin{equation}\label{impin}
c_3=\frac{6(q-4)}3=2q-8\ge q+(41-8)=q+33,
\end{equation}
$r_2(i)=4$, $r_3(i)=q-4$ for $i\in[1,6]$ and $r_2(k)\in[1,3]$ for $k\in[3,6]$.

First, it is clear that $\Delta(G)\le2$. Indeed, if (w) $r(3,4)\ge1$, $r(3,5)\ge1$ and $r(3,6)\ge1$, then each 3-colour occupies a position in $[1,2]\times[5,q]$ and in $\{3\}\times[5,q]$ as well so that $c_3\le q-4<2q-8$, a contradiction.

Further, if $r(i,k)=4$ for $\{i,k\}\in\binom{[3,6]}{2}$, then $\deg_G(m)=1$, $m=3,4,5,6$. Consequently, the subgraph of $G$ induced by the vertex set $[3,6]$ is $d$-regular for some $d\in[1,2]$.

If $d=1$, then $G$ is isomorphic to $3K_2$ and (w) $r(i,i+1)=4$, $i=3,5$. Clearly, a set $\ro(i,j,k)$ with $\{i,j,k\}\in\binom{[1,6]}3$ can be nonempty only if $\{i,j,k\}\cap\{l,l+1\}\ne\emptyset$, $l=1,3,5$. As a consequence the assumption $\ro(i,j,k)\ne\emptyset$ with $i<j<k$ implies $(i,j,k)\in\{(1,3,5),(1,3,6),(1,4,5),(1,4,6),(2,3,5),$ $(2,3,6),(2,4,5),(2,4,6)\})$.

Suppose that $\{(i_m,j_m):m\in[1,4]\}=\{(3,5),(3,6),(4,5),(4,6)\}=\{(k_m,l_m):m\in[1,4]\}$ and $\{i_m,j_m,k_m,l_m\}=[3,6]$ for $m\in[1,4]$. Then
\begin{equation}\label{3K2}
c_3=\sum_{m=1}^4[r(1,i_m,j_m)+r(2,k_m,l_m)] .
\end{equation}
Further, for $m,n\in[1,4]$ the sets $\{i_m,j_m\}$ and $\{k_n,l_n\}$ are disjoint if and only if $m=n$. Put 
\begin{alignat*}{2}
T(1)&=\{m\in[1,4]:r(1,i_m,j_m)\ge1\},\qquad &t(1)&=|T(1)|,\\
T(2)&=\{m\in[1,4]:r(2,k_m,l_m)\ge1\}, &t(2)&=|T(2)|
\end{alignat*}
and $T=T(1)\cap T(2)$. Let 
\begin{equation*}
\sigma(P)=\sum_{p\in P}[r(1,i_p,j_p)+r(2,k_p,l_p)]
\end{equation*}
for $P\subseteq[1,4]$. Using Claim~\ref{r3r3} we see that $m\in T(1)$ implies $r(2,k_m,l_m)\le9$, while $m\in T(2)$ means that $r(1,i_m,j_m)\le9$. Therefore, with $m\in T$ we have $r(1,i_m,j_m)+r(2,k_m,l_m)\le9+9=18$, and so $\sigma(T)\le18|T|$. 

If there is $i\in[1,2]$ with $t(i)=4$, then $q-4=r_3(3-i)\le4\cdot9$ and $q\le40$, a contradiction.

If there is $i\in[1,2]$ with $t(i)=1$ and $r(i,i_m,j_m)\ge1$, then $r_3(i)=r(i,i_m,j_m)=r_3(i_m)=r_3(j_m)=q-4\ge37$, hence $r(3-i,k_m,l_m)=r_3(3-i)=r_3(k_m)=r_3(l_m)=q-4\ge37$, which contradicts Claim~\ref{r3r3}.

We are left with the situation $t(1),t(2)\in[2,3]$ (and $|T|\le\min(t(1),t(2))$). Suppose (w) $t(1)\ge t(2)$. 

If $|T|=3$, then $T(1)=T=T(2)$, $[1,4]\setminus T=\{p\}$ and $r(1,i_p,j_p)=r(2,k_p,l_p)=0$, hence $2q-8=c_3=\sigma([1,4])=\sigma(T)\le18\cdot3=54$ and $q\le31$, a contradiction.

If $|T|=2$, then $n$ out of four summands that sum up to $\sigma([1,4]\setminus T)$ are positive, $n\le2$. Moreover, $\sigma([1,4]\setminus T)\le q-4$. The inequality is obvious provided that $n\le1$, while if $n=2$, $a\in T(1)\setminus T(2)$ and $b\in T(2)\setminus T(1)$, then with $e\in\{i_a,j_a\}\cap\{k_b,l_b\}$ we have $\sigma([1,4]\setminus T)=r(1,i_a,j_a)+r(2,k_b,l_b)\le r_3(e)=q-4$. Thus $c_3=\sigma(T)+\sigma([1,4]\setminus T)\le18\cdot2+(q-4)=q+32$ in contradiction to (\ref{impin}).

For $|T|=1$ we get $t(2)=2$. With $t(1)=2$ we obtain, similarly as in the case $|T|=2$, $c_3\le18+(q-4)=q+14$. So, assume that $t(1)=3$, $T=\{t\}$ and $T(2)\setminus T(1)=\{p\}$ (note that $p\ne t$).

Suppose first that $\{k_p,l_p\}\ne\{i_t,j_t\}$. In such a case $|\{k_p,l_p\}\cap\{i_t,j_t\}|=1$; moreover, since $\{k_p,l_p\}\subseteq[3,6]=\{i_t,j_t,k_t,l_t\}$, we have $|\{k_p,l_p\}\cap\{k_t,l_t\}|=1$, and there is $g\in\{k,l\}$ such that $\{k_p,l_p\}\cap\{k_t,l_t\}=\{g_t\}$. Then five from among eight summands in (\ref{3K2}) are positive, namely $r(1,i_t,j_t)$, $r(2,k_t,l_t)$, $r(2,k_p,l_p)$ and $r(1,i_m,j_m)$ with $m\in[1,4]\setminus\{t,p\}$. Having in mind that $g_t\notin\{i_t,j_t\}$, $g_t\notin\{i_p,j_p\}$, and each element of $[3,6]$ is involved in exactly two of the ordered pairs $(3,5),(3,6),(4,5)$, $(4,6)$, we see that except for $r(1,i_t,j_t)$ all mentioned positive summands correspond to colours of $\ro_3(g_t)$. That is why $c_3\le r(1,i_t,j_t)+r_3(g_t)\le9+(q-4)=q+5$, a contradiction to (\ref{impin}) again.

On the other hand, if $\{k_p,l_p\}=\{i_t,j_t\}$, then $|\{i_p,j_p\}\cap\{i_t,j_t\}|=|\{i_p,j_p\}\cap\{k_p,l_p\}|=0$, hence for $m\in[1,4]\setminus\{t,p\}$ we have $|\{i_m,j_m\}\cap\{i_t,j_t\}|=1$, and so positive summands in (\ref{3K2}) are $r(1,i_t,j_t)$, $r(2,k_t,l_t)$, $r(2,k_p,l_p)=r(2,i_t,j_t)$, $r(1,g_t,k_t)$ and $r(1,h_t,l_t)$, where $\{g,h\}=\{i,j\}$. Then 
\begin{align*}
q-4&=r_3(2)=r(2,k_t,l_t)+r(2,i_t,j_t),\\
q-4&=r_3(k_t)=r(2,k_t,l_t)+r(1,g_t,k_t),\\
\end{align*}
which yields
\begin{equation*}
r(2,i_t,j_t)=r(1,g_t,k_t)=q-4-r(2,k_t,l_t)\ge (q-4)-9=q-13
\end{equation*}
so that
\begin{equation*}
q-4=r_3(g_t)=r(1,i_t,j_t)+r(1,g_t,k_t)+r(2,i_t,j_t)\ge1+2(q-13)
\end{equation*}
and $q\le21$, a contradiction.

If $|T|=0$, then $t(1)=t(2)=2$, and, by symmetry,
$T(1)=[1,2]$, $T(2)=[3,4]$. We have $\{i_1,j_1\}\cup\{i_2,j_2\}=[3,6]$, for otherwise there is $e\in[3,6]\setminus(\{i_1,j_1\}\cup\{i_2,j_2\})$, hence $e\in\{i_3,j_3\}\cap\{i_4,j_4\}$, $e\notin\{k_3,l_3\}\cup\{k_4,l_4\}$ and $r_3(e)=0\ne q-4$, a contradiction. Thus, by symmetry we may assume that $i_1=3<i_2=4$.

Therefore, (w) $(i_1,j_1)=(3,5)$ and $(i_2,j_2)=(4,6)$, which means that $r(m,n,p)$ with $\{m,n,p\}\in\binom{[1,6]}{3}$ and $m<n<p$ is positive if and only if $(m,n,p)\in\{(1,3,5),(1,4,6),(2,3,6),(2,4,5)\}$. We have $r_3(6)=r(1,4,6)+r(2,3,6)=q-4\equiv1\pmod2$, hence
\begin{equation}\label{par}
r(1,4,6)\not\equiv r(2,3,6)\pmod2.
\end{equation}
Similarly, from $r_3(1)=q-4=r_3(3)$ it follows that $r(1,3,5)\not\equiv r(1,4,6)\pmod2$ and $r(1,3,5)\not\equiv r(2,3,6)\pmod2$ so that $r(1,4,6)\equiv r(2,3,6)\pmod2$, which contradicts (\ref{par}).
 
If $d=2$, $G$ has a 4-cycle component, (w) $\{\{3,4\},\{4,5\},\{5,6\},\{6,3\}\}\subseteq E(G)$. The completeness of $f_M$ implies $C_3\subseteq\ro(1,3,5)\cup\ro(1,4,6)\cup\ro(2,3,5)\cup\ro(2,4,6)$. So, 
$$c_3=[r(1,3,5)+r(2,4,6)]+[r(1,4,6)+r(2,3,5)].$$
Let 
$\bar{\mathcal T}=\{(1,3,5),(1,4,6),(2,3,5),(2,4,6)\}$,
\begin{equation*}
\mathcal T=\{(i,j,k)\in\bar{\mathcal T}:r(i,j,k)\ge1\}
\end{equation*}  
and $t=|\mathcal T|$. From $r_3(i)=q-4>0$ for $i\in[1,6]$ it follows that $t\ge2$.
If $(i,j,k),(l,m,n)\in\mathcal T$, $(i,j,k)\ne(l,m,n)$, then either $r(i,j,k)+r(l,m,n)\le9+9=18$ (by Claim~\ref{r3r3}, if $\{i,j,k\}\cap\{l,m,n\}=\emptyset$) or $r(i,j,k)+r(l,m,n)\le r_3(p)=q-4$ (if $p\in\{i,j,k\}\cap\{l,m,n\}$). As a consequence then $c_3\le b_t$, where $b_2=q-4$, $b_3=18+(q-4)=q+14$ and $b_4=18+18=36$. Thus $c_3\le\max(b_2,b_3,b_4)=q+14$, which contradicts (\ref{impin}). $\ep$

\begin{claim}\label{noK3}
No component of the graph $G$ is $K_3$.
\end{claim}
\bp Let (w) $G$ have the component $K_3$ with the vertex set $[1,3]$.

If $G=K_3\cup3K_1$, then $r(1,2)=r(1,3)=r(2,3)=4$, $c_2=12$ and $c_{4+}=0$. From Claim~\ref{exci} it follows that $r_{3+}(1,3)=0=r_{3+}(2,3)$, hence $r(3,i,k)\ge1$ with $\{i,k\}\in\binom{[1,6]\setminus\{3\}}2$ implies $\{i,k\}\in\{\{4,5\},\{4,6\},\{5,6\}\}$. By Claim~\ref{r2r*} then $r_3(3)=r(3,4,5)+r(3,4,6)+r(3,5,6)\le r^*([3,6])\le8$, hence $q=r_2(3)+r_3(3)\le(4+4)+8=16$, a contradiction.

If $G$ has besides the above $K_3$ another nontrivial component (of order at least 2) and $r(i,k)\ge1$ with $\{i,k\}\in\binom{[4,6]}2$, then, by Claim~\ref{r2r*}, $r(1,2)+r(1,3)+r(2,3)\le r^*([1,3])\le6$ and $r(4,5)+r(4,6)+r(5,6)\le r^*([4,6])\le6$, hence $12\le c_2\le6+6$, $c_2=12$, $r(1,2)+r(1,3)+r(2,3)=r(4,5)+r(4,6)+r(5,6)=6$, $c_{4+}=0$ and $c_3=2q-8$. It is easy to see that in such a case 2-colours fill in four columns of $M$ and $G=2K_3$. Consequently, $\ro(i,j,k)$ can be nonempty only if $\{i,j,k\}\in\{[1,3],[4,6]\}$. Since $41\le q=r_2(i)+r_3(i)=4+r_3(i)$ for $i\in[1,6]$, we have $r(1,2,3)\ge37$ and $r(4,5,6)\ge37$, which contradicts Claim~\ref{r3r3}. $\ep$

\begin{claim}\label{noPn}
No component of the graph $G$ is a path of order at least $3$.
\end{claim}
\bp Suppose that $G$ has a path component $P$ of order at least $3$.

If $G$ has besides $P$ another nontrivial component (of order at least $2$), then $G=P\cup P'$, where, by Claims~\ref{noK2} and \ref{noK3}, both $P$ and $P'$ are paths of order 3, (w) $V(P)=[1,3]$, $V(P')=[4,6]$ and $r(i,i+1)\ge1$ for $i=1,2,4,5$. Similarly as in the proof of Claim~\ref{noK3} it is easy to see that $r(1,2)+r(2,3)= r(4,5)+r(5,6)=6$. Since $r_2(2)=6$, there are colours $\alpha,\beta\in\ro(1,2)\cup\ro(2,3)$ such that $\Cov(\alpha)\cap\Cov(\beta)=\emptyset$. Then each colour of $\ro(4,5)\cup\ro(5,6)$ occupies a position in $[4,6]\times\Cov(\alpha)$ and a position in $[4,6]\times\Cov(\beta)$ as well so that $\Cov(\ro(4,5)\cup\ro(5,6))\subseteq\Cov(\alpha,\beta)$; this leads to a contradiction since $\cov(\ro(4,5)\cup\ro(5,6))\ge r_2(5)=6$ and $\cov(\alpha,\beta)=4$.

So, $P$ is the unique nontrivial component of $G$, (w) $V(P)=[1,p]$ and $E(P)=\{\{i,i+1\}:i\in[1,p-1]\}$. Since $12\le c_2=\sum_{i=1}^{p-1} r(i,i+1)\le4(p-1)$, we have $p\in[4,6]$. 

If $p=4$, then $r(i,i+1)=4$, $i=1,2,3$, $c_2=12$, $c_{4+}=0$ and
\begin{equation}\label{P4}
i\in[1,6]\Rightarrow r_3(i)=q-r_2(i)\ge q-8\ge33.
\end{equation}
It is easy to see that $\cov(\ro(1,2)\cup\ro(3,4))=4$, (w) $\Cov(\ro(1,2)\cup\ro(3,4))=[1,4]$. Further, colours of $\ro(1,2)\cup\ro(3,4)$ occupy all positions in $[1,4]\times[1,4]$, hence (w) $\Cov(\ro(2,3))=[5,n]$, where $n\in[8,12]$.

By Claim~\ref{exci} we know that $r(i,j,k)=0$ if there is $l\in[1,3]$ such that $\{l,l+1\}\subseteq\{i,j,k\}$. 

Suppose that $\gamma\in\ro(1,5,6)$. Since all pairs $\{\gamma,\delta\}$ with $\delta\in\ro(3,4)$ are good in $M$, two positions in $[5,6]\times[1,4]$ must be occupied by $\gamma$. Then, however, the number of pairs $\{\gamma,\varepsilon\}$ with $\varepsilon\in\ro(2,3)$ that are good in $M$ is at most two, a contradiction. So, $r(1,5,6)=0$, and an analogous reasoning shows that $r(4,5,6)=0$.

Claim~\ref{r2r*} yields $r(1,4,5)+r(1,4,6)\le r^*(\{1,4,5,6\})\le8$. A colour $\gamma\in\ro(2,5,6)$ as well as a colour $\delta\in\ro(3,5,6)$ occupies two positions in $[5,6]\times[1,4]$ (each pair $\{\gamma,\varepsilon\}$ with $\varepsilon\in\ro(3,4)$ and each pair $\{\delta,\zeta\}$ with $\zeta\in\ro(1,2)$ is good in $M$). Thus $r(2,5,6)+r(3,5,6)\le4$ and
\begin{equation}\label{sum1}
r(1,4,5)+r(1,4,6)+r(2,5,6)+r(3,5,6)\le12.
\end{equation}

The set of remaining triples $(i,j,k)\in[1,6]^3$ with $i<j<k$ such that $r(i,j,k)$ can be positive is $\mathcal T=\{(1,3,5),(1,3,6),(2,4,5),(2,4,6)\}$. Suppose that $r(i,j,k)\ge1$ with $(i,j,k)\in\mathcal T$ if and only if $(i,j,k)\in\{(i_l,j_l,k_l):l\in[1,t]\}$. We show that there is $m\in[1,6]$ with $r_3(m)\le30$ in contradiction to (\ref{P4}).

If $t=4$, then, by Claim~\ref{r3r3}, $r(i_l,j_l,k_l)\le9$, $l=1,2,3,4$, and so, using (\ref{sum1}), $r_3(m)\le12+2\cdot9=30$ for (any) $m\in[1,6]$.
\textit{}
If $t=3$ and $r(i,j,k)=0$ for $(i,j,k)\in\mathcal T$, then $r_3(m)\le12+9=21$ for $m\in\{i,j,k\}$. The same upper bound applies for $m\in[1,6]$ if $t=2$ and $\{i_1,j_1,k_1\}\cap\{i_2,j_2,k_2\}=\emptyset$.

If $t=2$ and $\{i_1,j_1,k_1\}\cap\{i_2,j_2,k_2\}\ne\emptyset$, for $m\in[1,6]\setminus(\{i_1,j_1,k_1\}\cup\{i_2,j_2,k_2\})$ we obtain $r_3(m)\le12$. The same inequality is available for $m\in[1,6]\setminus\{i_1,j_1,k_1\}$ if $t=1$ and for $m\in[1,6]$ if $t=0$.

If $p=5$, then, by Claim~\ref{r2r*}, from $r(4,5)\ge1$ it follows that $r(1,2)+r(2,3)\le r^*([1,3])\le6$; similarly, $r(1,2)\ge1$ yields $r(3,4)+r(4,5)\le r^*([3,5])\le6$. Then $12\le c_2=\sum_{i=1}^4 r(i,i+1)\le6+6$, $c_2=12$ and $r(1,2)+r(2,3)=6=r(3,4)+r(4,5)$. If $(M)_{1,j}=\gamma\in\ro(1,2)$, then all positions in $[3,5]\times\Cov(\gamma)$ are occupied by six distinct colours of $\ro(3,4)\cup\ro(4,5)$, hence $(M)_{3,j}\in\ro(3,4)$ and $\delta=(M)_{5,j}\in\ro(4,5)$. Analogously, all positions in $[1,3]\times\Cov(\delta)$ are occupied by six distinct colours of $\ro(1,2)\cup\ro(2,3)$, which implies $(M)_{3,j}\in\ro(2,3)$, a contradiction.

If $p=6$, then, by Claim~\ref{r2r*}, $r^*([1,6]\setminus[l,l+1])\le8$ for $l\in[1,5]$, hence
\begin{equation}\label{r*}
\rho^*=\sum_{l=1}^5 r^*([1,6]\setminus[l,l+1])\le40.
\end{equation}
It is easy to see that in the sum $\rho^*$ each of the summands $r(i,i+1)$ with $i\in[1,5]$ appears in the expression of $r^*([1,6]\setminus[l,l+1])$ for 
at least two $l$'s, while each of the summands $r(i,j,k)$ satisfying $\{i,j,k\}\in\binom{[1,6]}3\setminus\{\{1,3,5\},\{2,3,5\},$ $\{2,4,5\},\{2,4,6\}\}$ and $i<j<k$ appears for at least one $l$. Since $c_2=\sum_{l=1}^5 r(l,l+1)$, with
\begin{equation*}
\rho=r(1,3,5)+r(2,3,5)+r(2,4,5)+r(2,4,6)
\end{equation*}
the inequality (\ref{r*}) leads to 
\begin{equation}\label{40}
2c_2+c_3-\rho\le\rho^*\le40.
\end{equation}
Moreover, $r_2(l)+r_3(l)\le q$ implies
\begin{equation*}\label{2,5}
r_3(l)\le q-r_2(l)=q-[r(l-1,l)+r(l,l+1)]\le q-2,\ l=2,5,
\end{equation*}
and so, having in mind that, by Claim~\ref{r3r3}, $\min(r(1,3,5),r(2,4,6))\le9$,
\begin{align}\label{rho}
\rho&\le\min(r_3(2)+r(1,3,5),r_3(5)+r(2,4,6))\nonumber\\
&\le q-2+\min(r(1,3,5),r(2,4,6))\le q+7.
\end{align}
Since, by Claim~\ref{+m}.8, $c_{4+}-c_2\le-12$, using (\ref{40})--(\ref{rho}) we obtain
\begin{equation*}
2q+4=c_2+c_3+c_{4+}\le40+\rho+c_{4+}-c_2\le40+(q+7)-12=q+35,
\end{equation*}
and, finally, $q\le31$, a contradiction. $\ep$
\vskip2mm

With $l\in\mathbb Z$ and $m\in[2,\infty)$ we use $(l)_m$ to denote $n\in[1,m]$ satisfying $n\equiv l\pmod m$.

\begin{claim}\label{noC4}
No component of the graph $G$ is a $4$-cycle.
\end{claim}
\bp If $G$ has a 4-cycle component, (w) $r(i,(i+1)_4)\ge1$ for $i\in[1,4$]. Note that, by Claim~\ref{noK2}, $r(5,6)=0=r_2(5)=r_2(6)$. Let $i\in[1,4]$ and
$$P_i=\{((i+2)_4,5),((i+2)_4,6),(5,6)\}.$$
Since evidently
\begin{equation*}
\ro_{3+}(i)=\ro(i,(i+2)_4,5,6)\cup\bigcup_{j\in\{(i-1)_4,i\}}\ro_{3+}(j,(j+1)_4)\cup\bigcup_{(j,k)\in P_i}\ro(i,j,k),
\end{equation*}
by Claim~\ref{exci} we have
\begin{align*}
r_{3+}(i)&\le r(i,(i+2)_4,5,6)+\sum_{j\in\{(i-1)_4,i\}}r_{3+}(j,(j+1)_4)+\sum_{(j,k)\in P_i}r(i,j,k)\nonumber\\
&\le r(i,(i+2)_4,5,6)+\sum_{j\in\{(i-1)_4,i\}}[4-r(j,(j+1)_4)]+\sum_{(j,k)\in P_i}r(i,j,k).
\end{align*}
Therefore, realising that 
\begin{equation*}
\sum_{i=1}^4\sum_{j\in\{(i-1)_4,i\}}r(j,(j+1)_4)=\sum_{i=1}^4 r_2(i),
\end{equation*}
we obtain
\begin{equation}\label{+32}
\sum_{i=1}^4 r_{3+}(i)\le32+\sum_{i=1}^4\left[r(i,(i+2)_4,5,6)+\sum_{(j,k)\in P_i}r(i,j,k)\right]-\sum_{i=1}^4 r_2(i).
\end{equation}

On the right-hand side  of the inequality (\ref{+32}) there are among others (partial) summands $r(A)$ with $A\in\binom{[1,6]}3\cup\binom{[1,6]}4$, each such summand appears there with the frequency 0, 1 or 2, and the frequency is 2 if and only if $A\in\{\{1,3,5\},\{1,3,6\},\{2,4,5\},\{2,4,6\},\{1,3,5,6\},\{2,4,5,6\}\}$. Thus, with
\begin{align*}
\rho_3&=r(1,3,5)+r(1,3,6)+r(2,4,5)+r(2,4,6),\\
\rho_4&=r(1,3,5,6)+r(2,4,5,6)
\end{align*}
the inequality (\ref{+32}) leads to
\begin{equation}\label{4q}
4q=\sum_{i=1}^4[r_2(i)+r_{3+}(i)]\le32+\rho_3+\rho_4+c_3+c_4.
\end{equation}

Let us show that
\begin{equation}\label{rhosum}
\rho_3\le q+10.
\end{equation}
To see it let $a$ be the number of sets $A$ belonging to
$$\mathcal A=\{\{1,3,5\},\{1,3,6\},\{2,4,5\},\{2,4,6\}\}$$
with $r(A)\ge1$. 

If $a=4$, by Claim~\ref{r3r3} we have $\rho_3\le4\cdot9=36\le q+10$.

In the case $a=3$ there is $i\in[1,4]$ such that $\rho_3\le2\cdot9+r_3(i)$. Evidently, $r_3(i)\le q-r_2(i)=q-[r((i-1)_4,i)+r(i,(i+1)_4)]\le q-(4+4)=q-8$, hence $\rho_3\le q+10$.

If $a=2$, let the positive summands of $\rho_3$ be $r(i,j,k)$ and $r(l,m,n)$, $\{i,j,k\}\ne\{l,m,n\}$. If $\{i,j,k\}\cap\{l,m,n\}=\emptyset$, then $\rho_3\le2\cdot9\le q+10$, and otherwise, with $p\in\{i,j,k\}\cap\{l,m,n\}$,  we have $\rho_3\le r_3(p)\le q$.

If $a\in[0,1]$, then $\rho_3\le q-2$, since $r(i,j,k)\ge1$ with $\{i,j,k\}\in\mathcal A$ and $i<j<k$ implies $i\in[1,2]$, while $r_2(i)\ge1+1$.

Now realise that, by Claim~\ref{+m}.8, $\rho_4+c_3+c_4\le c_3+2c_{4+}=|C|+(c_{4+}-c_2)\le(2q+4)-12=2q-8$, and so, using (\ref{4q}) and (\ref{rhosum}), $4q\le32+(q+10)+(2q-8)=3q+34$ and $q\le34$, a contradiction. $\ep$

\begin{claim}\label{noC5}
No component of the graph $G$ is a $5$-cycle.
\end{claim}
\bp If $G$ has a 5-cycle component, (w) $r(i,(i+1)_5)\ge1$ for $i\in[1,5]$. Similarly as in the proof of Claim~\ref{noC4} for $i\in[1,5]$ we get
\begin{equation*}
r_{3+}(i)\le r((i-2)_5,i,(i+2)_5,6)+\sum_{j\in\{(i-1)_5,i\}}r_{3+}(j,(j+1)_5)+\sum_{(j,k)\in P_i}r(i,j,k),
\end{equation*}
this time with 
$$P_i=\{((i-2)_5,6),((i+2)_5,6),((i-2)_5,(i+2)_5)\},$$
which yields
\begin{equation}\label{C5-}
5q=\sum_{i=1}^5[r_2(i)+r_{3+}(i)]\le40+\rho_3+c_3+c_4,
\end{equation}
where 
\begin{equation}\label{rho3}
\rho_3=r(1,3,6)+r(1,4,6)+r(2,4,6)+r(2,5,6)+r(3,5,6)\le r_3(6)\le q.
\end{equation}
Moreover, $c_3+c_4\le2q+4-c_2\le2q-8$, and so, using (\ref{C5-}) and (\ref{rho3}), $5q\le40+q+(2q-8)=3q+32$ and $q\le16$, a contradiction. $\ep$

\begin{claim}\label{noC6}
The graph $G$ is not a $6$-cycle.
\end{claim}
\bp If $G$ is a 6-cycle, (w) $r(i,(i+1)_6)\ge1$ for $i\in[1,6]$. In this case $r_{3+}(i)$ is upper bounded by 
\begin{equation*}
r((i-2)_6,i,(i+2)_6,(i+3)_6)+\sum_{j\in\{(i-1)_6,i\}}[4-r(j,(j+1)_6)]+\sum_{(j,k)\in P_i}r(i,j,k)
\end{equation*}
with 
$$P_i=\{((i-2)_6,(i+2)_6),((i-2)_6,(i+3)_6),((i+2)_6,(i+3)_6)\},$$
and one can see that 
\begin{equation}\label{C6-}
6q\le48+\rho_3+c_3+c_4,
\end{equation}
where $\rho_3=2r(1,3,5)+2r(2,4,6)$. We can bound $\rho_3$ from above by $2q-4$. Indeed, if both $r(1,3,5)$ and $r(2,4,6)$ are positive, then Claim~\ref{r3r3} yields $\rho_3\le18+18=36\le2q-4$. On the other hand, if $r(i,j,k)=0$ with $(i,j,k)\in\{(1,3,5),(2,4,6)\}$, then $\rho_3\le2r_3(i+1)\le2[q-r_2(i+1)]=2q-2[r_2(i,i+1)+r_2(i+1,i+2)]\le2q-4$. Therefore, similarly as in the proof of Claim~\ref{noC5}, from (\ref{C6-}) we obtain $6q\le48+(2q-4)+(2q-8)=4q+36$ and $q\le18$, a contradiction. $\ep$
\vskip2mm

Thus, by Claims~\ref{Dle2}--\ref{noC6}, we conclude that $G=6K_1$ and $c_2=0$, which contradicts Claim~\ref{+m}.3. Therefore, Theorem~\ref{main} is proved. \end{proof} 
\vskip3mm

\noindent{\Large{\bf Acknowledgements}}
\vskip1mm

\noindent This work was supported by the Slovak Research and Development Agency under the contract APVV-19-0153.


\begin{thebibliography}{99}

\bibitem{B} A. Bouchet, Indice achromatique des graphes multiparti complets et r\'eguliers, Cahiers du Centre d'\'Etudes de Recherche Op\'erationelle 20 (1978) 331--340.

\bibitem{CaE} N. Cairnie, K.J. Edwards, The achromatic number of bounded degree trees, Discrete Math. 188 (1998) 87--97.

\bibitem{ChaZ} G. Chartrand, P. Zhang, Chromatic Graph Theory, CRC Press, Boca Raton, 2009.

\bibitem{ChiF1} N.P. Chiang, H.L. Fu, On the achromatic number of the cartesian product $G_1\times G_2$, Australas. J. Combin. 6 (1992) 111--117.

\bibitem{ChiF2} N.P. Chiang, H.L. Fu, The achromatic indices of the regular complete multipartite graphs, Discrete Math. 141 (1995) 61--66.

\bibitem{E1} K.J. Edwards, The harmonious chromatic number and the achromatic number, in: R.A. Bailey (Ed.), Surveys in Combinatorics (Invited papers for 16th British Combinatorial Conference), 
Cambridge University Press, Cambridge, 1997, pp. 13--47.

\bibitem{E2} K.J. Edwards, A Bibliography of Harmonious Colourings and Achromatic Number, staff.computing.dundee.ac.uk/kedwards/biblio.html.

\bibitem{HaHePr} F. Harary, S. Hedetniemi, G. Prins, An interpolation theorem for graphical homomorphisms, Portug. Math. 26 (1967) 454--462.

\bibitem{HeM} P. Hell, D.J. Miller, Achromatic numbers and graph operations, Discrete Math. 108 (1992) 297--305.

\bibitem{HuMG} F. Hughes, G. MacGillivray, The achromatic number of graphs: a survey and some new results, Bull. Inst. Combin. Appl. 19 (1997) 27--56.

\bibitem{HoPc1} M. Hor\v n\'ak, \v S. P\v cola, Achromatic number of $K_5\times K_n$ for large $n$, Discrete Math. 234 (2001) 159--169.

\bibitem{HoPc2} M. Hor\v n\'ak, \v S. P\v cola, Achromatic number of $K_5\times K_n$ for small $n$, Czechoslovak Math. J. 53 (2003) 963--988.

\bibitem{HoPu} M. Hor\v n\'ak, J. Puntig\'an, On the achromatic number of $K_m\times K_n$, in: M. Fiedler (Ed.), Graphs and Other Combinatorial Topics, Teubner, Leipzig, 1983, pp. 118--123.

\bibitem{IK} W. Imrich, S. Klav\v zar, Product Graphs, Wiley-Interscience, New York, 2000.
\end{thebibliography}
\end{document}